\newtheorem{theorem}{Theorem}[section]
\newtheorem{lemma}[theorem]{Lemma}
\newtheorem{proposition}[theorem]{Proposition}
\newtheorem{definition}[theorem]{Definition}
\newtheorem{assumption}{Assumption} 
\newtheorem{remark}[theorem]{Remark}
\newcommand{\lS} {\ell_{{\mathcal S}}}
\newcommand{\ep}{\varepsilon}
\renewcommand{\phi}{\varphi}
\newcommand{\lref}{\widehat\ell}
\newcommand{\up} {\upsilon}
\newcommand{\wt}{\widetilde}
\newcommand{\ext}{{\rm ext}}
\def\wh{\widehat} %
\def\la{\lambda}
\newcommand\wtau[1]{\widehat\tau_{#1}}
\newcommand{\wu}{\widehat{\upsilon}}
\def\wla{\widehat\la}
\def\wxi{\widehat\xi}
\def\wxu{\widehat x_1}%
\def\wxd{\widehat x_2}%
\def\wxf{\widehat x_f}%
\def\a{\alpha} %
\renewcommand{\b}{c} %
\newcommand{\gtause}{\gamma\se_{\wtau{2}}}
\def\e{\varepsilon} %
\def\wh{\widehat} %
\newcommand{\wtc}{\widetilde c}
\newcommand{\wtg}{\widetilde\gamma}
\renewcommand\L{\mathbb L} %
\newcommand\R{\mathbb R} %
\newcommand\Rn{{\mathbb R}^n} %
\newcommand\id{{\operatorname{id}}} %
\newcommand{\wS}[1]{\widehat S_{#1}} %
\newcommand{\wSinv}[1]{\widehat S^{-1}_{#1}} %
\newcommand{\qo}{\text{a.e. }} %
\newcommand{\cO}{{\mathcal O}} %
\newcommand{\cS}{{\mathcal S}} %
\newcommand{\cU}{{\mathcal U}} %
\newcommand{\cV}{{\mathcal V}} %
\newcommand{\cW}{{\mathcal W}} %
\newcommand{\cinf}{C^\infty} %
\newcommand{\unoforma}{{\boldsymbol s}} %
\newcommand{\scal}[2]{\langle {#1} \, , \, {#2} \rangle} %
\newcommand{\dueforma}[2]{{\boldsymbol\sigma}\left( {#1}, {#2} \right) } %
\newcommand{\liebr}[2]{\left[  {#1}, {#2} \right] } %
\newcommand{\poissonbr}[2]{\left\{  {#1}, {#2} \right\} } %
\newcommand{\liede}[3]{ L_{#1}{#2} \left( {#3}\right) } %
\newcommand{\liededo}[3]{ L^2_{#1}{#2} \left( {#3}\right) } %
\newcommand{\liedede}[4]{ L_{#1}L_{#2}{#3}\left( {#4}\right)  } %
\newcommand{\bsi}{{\boldsymbol\sigma} } %
\newcommand{\ud}{\operatorname{d}\!} %
\newcommand{\uD}{\operatorname{D}\!} %
\newcommand{\cF}{{\mathcal F}} %
\newcommand{\cH}{{\mathcal H}} %
\newcommand{\vFref}[1]{\overrightarrow{\wh F_{#1}}} %
\newcommand{\fref}[1]{{\wh f_{#1}}} %
\newcommand{\Fref}[1]{{\wh F_{#1}}} %
\newcommand{\vF}[1]{\overrightarrow{F_{#1}}} %
\newcommand{\vH}[1]{\overrightarrow{H_{#1}}} 
\newcommand{\vHmax}[1]{\overrightarrow{H}^{\rm max}} %
\newcommand{\ddt}{\displaystyle\frac{\ud}{\ud t}} %
\newcommand{\dds}{\displaystyle\frac{\ud\phantom{s}}{\ud s}} %
\newcommand{\ddtt}{\displaystyle\frac{\ud^2}{\ud t^2}} %
\newcommand{\lo}{{\lref_0}} %
\newcommand{\lu}{{\lref_1}} %
\newcommand{\ld}{{\lref_2}} %
\newcommand{\lf}{{\lref_f}} %
\newcommand{\dl}{{\delta\ell}} %
\newcommand{\de}{{\delta e}} %
\newcommand{\dep}{\delta p} %
\newcommand{\dx}{{\delta x}} %
\newcommand{\dz}{\delta z} %
\newcommand{\se} {^{\prime \prime}} %
\newcommand{\co}{\overline{\rm{co}}}
\newcommand{\fS}{f_1}
\newcommand{\FS}{F_1}
\newcommand{\vFS}{\overrightarrow{F_1}}
\begin{document}

\title{Strong local optimality for a bang-bang-singular extremal: \\
the fixed-free case}

\author{Laura Poggiolini and Gianna Stefani}

\date{Dipartimento di Matematica e Informatica "Ulisse Dini"\\ Universit\`a degli Studi di Firenze \\[3mm]
{\normalsize{\tt laura.poggiolini@unifi.it},  {\tt gianna.stefani@unifi.it}}}
\maketitle

\begin{abstract}
  In this paper we give sufficient conditions for a Pontryagin extremal trajectory, consisting of two bang arcs followed by a singular one, to be a strong local minimizer for a Mayer problem. The problem is defined on a manifold $M$ and the end-points constraints are of fixed-free type. We use a Hamiltonian approach and its connection with the second order conditions in the form of an accessory problem on the tangent space to $M$ at the final point of the trajectory. Two examples are proposed.
\end{abstract}
%
\section{Introduction}
In this paper we consider a reference trajectory consisting of two bang arcs followed by a singular (or partially singular) one, for a Mayer problem with fixed final time $T$ and a control affine dynamics.

We give sufficient optimality conditions for the reference trajectory to be a strong
local minimiser in the case when the end-point constraints are of fixed-free type.

A Bolza problem can be reduced to a Mayer one, hence sufficient optimality conditions can be also derived for a Bolza problem, see the examples in Section  \ref{sec:examples}.

Control affine systems can be modelled in different ways; since we want to consider both bang-bang arcs and partially singular arcs, we model the system as follows.

Let $M$ be a finite dimensional manifold and let $X_1, \ldots, X_m $ be smooth vector fields on $M$.
Let $\Delta := \left\{ 
u= \left(u_1, \ldots, u_m \right) \in \R^m \colon u_i \geq 0 ,\  i=1, \ldots, m, \ \sum_{i=1}^m u_i = 1
\right\} $ 
so that at each point $x \in M$ the closed convex hull $\mathcal{X}$ of the vector fields  $X_1, \ldots, X_m$  is given by
\[
\mathcal{X}(x) =  \left\{
\sum_{i=1}^m u_i X_i(x) \colon u = \left(u_1, \ldots, u_m \right) \in \Delta
\right\}.
\]
Let $T > 0$ and $x_0 \in M$, we consider an optimal control problem of the following kind
\begin{subequations}\label{eq:problema}
\begin{align}
& \text{minimize } \ c(\xi(T)) \ \text{subject to } \\
& \dot\xi(t) \in \mathcal{X}(\xi(t))
\quad \qo t \in [0,T], \label{eq:dinamica}\\
& \xi(0) = x_0. \label{eq:inizio}
\end{align}
\end{subequations}
Equivalently, by Filippov's theorem, see e.g.~\cite{BP07}, equation \eqref{eq:dinamica} can also be written as 
\[
 \dot\xi(t) = \sum_{i=1}^m \up_i(t) X_i(\xi(t)), \quad \qo t \in [0,T], \quad \up \in L^\infty\left([0, T], \Delta \right).
 \]
Our aim is to give sufficient conditions for an extremal reference trajectory to be indeed a {\em strong local} optimiser of the problem in the following sense:
\begin{definition}
The trajectory $\wxi \colon [0, T] \to M$ is a {\em strong local minimiser} of problem \eqref{eq:problema} if there exists a neighbourhood $\cU$ of its graph in $\R \times M$ such that $\wxi$ is a minimiser among the admissible trajectories whose graph is in $\cU$, i.e.~among the admissible trajectories which are in a neighborhood of $\wxi$ with respect to the $C^0$ topology.
\end{definition}
Here we assume that the control associated to the reference trajectory is the concatenation of two bang arcs and of a partially singular one, as explained below.
\begin{remark}
In this paper we consider the case when the final point is not constrained, in order to avoid some technical difficulties. 
In a future paper, \cite{PS17}, we shall extend the result to the case when the final point $\xi(T)$ is constrained to a smooth submanifold $N$ of $M$. The extension can be obtained by adding a penalty term and taking advantage of some classical results on quadratic forms due to Hestenes, see \cite{Hes51}, which permit to reduce the problem to a problem with free final point.

In \cite{PS17} we shall also give an explicit formulation of the sufficient conditions for a Bolza problem.
\end{remark}

Assume $\wh\xi$ is the reference trajectory and that there exist times $\wtau{1}, \wtau{2}$, $0  < \wtau{1} < \wtau{2} < T$, 
vector fields $h_1$, $h_2$, $h_3 \in \left\{ X_1, \ldots X_m\right\}$,
 (where $h_1$ and $h_3$ might be the same vector field) and a measurable 
function $\wu \in L^\infty \left( [\wtau{2}, T], (0,1) \right)$ 
such that the solution $\wxi$ to 
\begin{alignat}{2}
& \dot\xi(t) = h_1(\xi(t)) && t \in [0, \wtau{1}), \notag \\
& \dot\xi(t) = h_2(\xi(t)) && t \in (\wtau{1}, \wtau{2}), \notag \\
& \dot\xi(t) = \wu (t)h_3(\xi(t)) +\left( 1 -  \wu (t) \right) h_2(\xi(t))   \qquad && \qo t \in (\wtau{2}, T], \notag \\
&\xi(0) = x_0, \notag 
\end{alignat}
satisfies Pontryagin Maximum Principle (PMP). 

Setting $\fS := h_3 - h_2 $ we can write the dynamic on the singular arc as 
\begin{equation}\label{eq:terzo}
\dot\xi(t) = h_2(\xi(t)) + \wu (t) \fS(\xi(t)), \qquad t \in (\wtau{2}, T).
\end{equation}
We shall also define the time-dependent reference vector field $\wh f_t$ as
\begin{equation}
\wh f_t := \begin{cases}
h_1 & t \in [0, \wtau{1}), \\
h_2 & t \in (\wtau{1}, \wtau{2}), \\
h_2 + \wu(t) \fS \quad & \qo t \in (\wtau{2}, T].
\end{cases}
\label{eq:reffield}
\end{equation}
To get the sufficient conditions we use a Hamiltonian approach and its connection with the second order conditions, whose leading ideas are the following:  
\begin{enumerate}
\item To use the symplectic properties of the cotangent bundle to compare the costs of neighbouring admissible trajectories by lifting them to the cotangent bundle. 
\item To define a suitable Hamiltonian flow  $\cH_t$ in the cotangent bundle $T^*M$,  emanating from a horizontal Lagrangian submanifold $\Lambda$.
Since the final point is free, the flow is considered to have the final time  $T$ as a starting time and to go backward in time, Sec. \ref{sec:overflow}.
\item To obtain a suitable second order approximation (\emph{$2^{nd}$ variation}) in the form of a coordinate-free linear-quadratic (LQ) problem and to require its coercivity,
Sec. \ref{sec:2ndvar}.
\item To show that the derivative of $\cH_t$ along the reference extremal is, up to an isomorphism, the linear Hamiltonian flow associated to the LQ problem, see
Sec. \ref{sec:consequence}  and \ref{sec:antiso}.
\item To deduce that the projection on $M$ of $\cH_{t}$ emanating from $\Lambda$ is locally invertible (see Sec. \ref{sec:proof}), so that we can go back to the first issue and we can compare the costs of neighbouring admissible trajectories by lifting them to the cotangent bundle, Theorem \ref{thm:main1}. 
\end{enumerate}
In this paper we only give the main ideas of the constructions and some proofs of the main results, while all the details will be given in \cite{PS17}.

\section{Notation and preliminaries}
In this paper we use some basic element of the theory of symplectic
manifolds referred to the cotangent bundle $T^*M.$ For a general
introduction see \cite{Arn80}, for specific application to Control
Theory we refer to \cite{AS04}.  Let us recall some
basic facts and let us introduce some specific notations.

Denote by $ \pi \colon T^*M \to M $ the canonical projection, for
$\ell \in T^*M$ the space $T_{\pi\ell}^*M$ is canonically embedded in
$T_{\ell}T^*M$ as the space of tangent vectors to the fibres.

The canonical Liouville one--form $\unoforma$ on $T^*M$ and the
associated canonical symplectic two--form $\bsi = \ud\unoforma$ allow
associating to any, possibly time--dependent, smooth Hamiltonian $H_t
\colon T^*M \rightarrow \R$, a Hamiltonian vector field $\vH{t}$, by
\[
\bsi(v,\vH{t}(\ell))=\scal{ \ud H_t(\ell)}{v} ,\quad \forall v\in
T_{\ell}T^*M .
\]
In this paper we consider all the flows -- both in $M$ and in $T^*M$ -- as
starting at the final time $T$, unless otherwise explicitly stated.  We
denote the flow of $\vH{t}$ from time $T$ to time $t$ by
\[
\cH :(t,\ell)\mapsto \cH(t,\ell)=\cH_t(\ell) .
\]
We keep these notation throughout the paper, namely the overhead arrow
denotes the vector field associated to a Hamiltonian and the script
letter denotes its flow from time $T$, unless otherwise stated.

Finally we recall that any vector field $f$ on the manifold $M$
defines, by lifting to the cotangent bundle, a Hamiltonian
\begin{equation*}
  F \colon \ell\in T^*M  \mapsto \scal{\ell}{f(\pi\ell)} \in \R.
\end{equation*}
We denote by $\FS$, $H_i$ the Hamiltonians associated to $\fS$, $h_i$, $i=1, \ 2, \ 3$, respectively and by
\[
\begin{alignedat}{2}
& H_{ij} := \poissonbr{H_{i}}{H_{j}}, \quad && i, \ j \in 1, 2, 3 \\
& H_{ijk} := \poissonbr{H_{i}}{\poissonbr{H_{j}}{H_{k}}}, \quad && i, \ j, \ k \in 1, 2, 3 \\
\end{alignedat}
\]
the Poisson parenthesis and the iterated Poisson parenthesis between Hamiltonians. We recall that $H_{ij}$ is the Hamiltonian associated to the Lie bracket $h_{ij}:=
[h_i, h_j]$.

In order to write the second order variation of the problem in an useful way we shall consider flows going backwards in time, i.e.~starting at the final time $T$.
The flow from time $T$ of the reference vector field $\wh f_t$
is a map defined in a neighbourhood of $ \wxf := \wh\xi(T)$.  We
denote such flow as $ \wh S_t \colon M \to M$, $ t \in [0, T] $, i.e.~
\[
\ddt \wS{t}(x) = \wh f_t \circ \wS{t}(x), \quad \wS{T}(x) = x  .
\]
We also denote 
$
\wxu :=\wxi(\wtau{1}) = \wS{\wtau{1}}(\wxf), \ \wxd :=\wxi(\wtau{2}) = \wS{\wtau{2}}(\wxf). $

The time-dependent Hamiltonian associated to $\wh f_t$ is denoted by $\wh F_{t}$ and its flow backwards in time starting at time $T$ is denoted by $\wh\cF_t$.

Also we use the following notation from differential geometry: $\liede{f}{\a}{\cdot}$ is the Lie derivative of a function $\a$ with respect to the
vector field $f$. Moreover, if $G$ is a $C^1$ map from a manifold $M_1$
in a manifold $M_2$, we denote its tangent map at a point $x \in M_1$ as
$T_x G$. If the point $x$ is clear from the context, we also write
$T_x G= G_* \, $.

\subsection{The necessary conditions}
We start by stating the necessary conditions of optimality, i.e.~Pontryagin Maximum Principle (PMP) and the Legendre condition.
Since there is no constraint on the final point, then PMP must hold in its normal form: 
\begin{assumption}[PMP]\label{ass:PMP}
There exists a map $\wla \colon [0, T] \to T^*M$, which is absolutely continuous and 
such that
\begin{alignat*}{2}
& \pi\wla(t) = \wxi(t) \qquad & t \in[0,T] , \\
& \dot\wla(t) = \vFref{t}(\wla(t)) \qquad & \qo t \in[0,T] , \\
& \wla(T) = - \ud c(\wxf), \\
& 
\Fref{t}(\wla(t)) = \max\left\{
\scal{\wla(t)}{v} \colon v \in \mathcal{X}(\wxi(t)) 
\right\}  \qquad & t \in[0,T] .
\end{alignat*}
\end{assumption}
We shall use the following notation for the end points and for the switching points of $\wla(t)$:
\[
  \lf := \wla(T), \quad  \ld := \wla(\wtau{2}) = \wh\cF_{\wtau{2}}(\lf),  \quad   \lu := \wla(\wtau{1}) = \wh\cF_{\wtau{1}}(\lf), \quad 
\lo := \wla(0) = \wh\cF_0(\lf).
\]
Thanks to the structure of the reference trajectory, PMP gives the following necessary conditions:
\begin{enumerate}
 \item On the first bang arc, $ t \in[0,\wtau{1}]$, we get $ \  H_1(\wla(t)) \geq \scal{\wla(t)}{X},  \quad \forall X \in \mathcal{X}(\wxi(t))$.
\item On the second bang arc, $ t \in[\wtau{1}, \wtau{2}]$, we get $\ H_2(\wla(t)) \geq \scal{\wla(t)}{X}, \quad \forall X \in \mathcal{X}(\wxi(t))$, in particular $H_1(\ld) = H_2(\ld)$. 
\item On the singular arc, $t \in [\wtau{2}, T]$, we get 
\[
 \left( H_2 + a \FS\right) (\wla(t)) \geq \scal{\wla(t)}{X}, \quad \forall X \in \mathcal{X}(\wxi(t)),  \ \forall a \in [0,1] ,
 \]
which implies $\FS(\wla(t)) \equiv 0$ and, by differentiation, 
\[
\ddt \FS(\wla(t)) = 
H_{23} (\wla(t)) 
\equiv \poissonbr{H_2}{\FS}\circ\wla(t) 
 = 0
\] 
and 
\begin{equation} \label{eq:sing3}
 - H_{232}(\wla(t))+ \wu(t)\L(\wla(t))  =  0 
\end{equation}
where 
\[
\L(\ell) := (H_{323} + H_{232})(\ell)= \scal{\ell}{\liebr{\fS}{\liebr{h_2}{\fS}}(\pi\ell)} , \quad  \ell \in T^*M.
\]
\item At the first switching time $\wtau{1}$ we get 
$H_{12}(\lu) = \left. \ddt \left(H_2 - H_1 \right)\circ\wla(t) \right\vert_{t = \wtau{1}} \geq 0$, 
see for example \cite{ASZ02b}.
\item At the second switching time $\wtau{2}$ we get
$H_{232}(\ld) = \left. - \, \ddtt  F_1 \circ \wla(t)\right\vert_{t = \wtau{2}^-} \!\! \geq 0$, see \cite{PS11}.
\end{enumerate}
Moreover, other necessary conditions are known, namely the Goh condition (which in this case is automatically satisfied) and the {\em generalised Legendre condition} (GLC), see e.g.~\cite{AS04}, 
\[
R(t) := \L(\wla(t))  \geq 0 \qquad t \in[\wtau{2}, T] .
\]

\section{Assumptions and main result}
\subsection{Regularity conditions}
We now state regularity conditions by requiring strict inequalities to hold whenever necessary conditions yield mild inequalities.
\begin{assumption}[Regularity along the bang arcs]\label{ass:regbang}
\begin{equation*} 
\begin{split}
& H_1(\wla(t)) > \scal{\wla(t)}{X}, \qquad \forall X \in \mathcal{X}(\wxi(t)) \setminus \{h_1(\wxi(t)) \}, \quad \forall t \in [0, \wtau{1}) , \\ %
& H_2(\wla(t)) > \scal{\wla(t)}{X}, \qquad \forall X \in \mathcal{X}(\wxi(t)) \setminus \{h_2(\wxi(t)) \}, \quad \forall t \in (\wtau{1}, \wtau{2}) ,
\end{split}
\end{equation*}
\end{assumption}
i.e.~we require that the reference control is the only maximising control along the given arc.
\begin{assumption}[Regularity along the singular arc]\label{ass:regsing}
For any $a, s \in [0,1]$ and any  $t \in[\wtau{2}, T]$ 
\begin{equation*}
H_2(\wla(t)) + a F_1(\wla(t)) > \scal{\wla(t)}{X(\wxi(t))}, \qquad \forall X \in \mathcal{X} , \quad  X \neq h_2 + s f_1,
\end{equation*} 
\end{assumption}
i.e.~we require that the set of maximisers along the singular arc is the edge defined by $h_2$ and $h_3$ .

\begin{assumption}[Regularity at the switching points]\label{ass:switch}
\begin{equation}
 H_{12}(\lu) > 0, \qquad H_{232}(\ld)  > 0. \label{eq:switch2} 
\end{equation}
\end{assumption}
\begin{assumption}[Strong generalised Legendre condition]\label{ass:SGLC}
\begin{equation}
 R(t) = \L(\wla(t)) =  \poissonbr{F_1}{\poissonbr{H_2}{F_1}}(\wla(t)) 
> 0 \qquad t \in[\wtau{2}, T] \label{eq:SGLC}\tag{SGLC}
\end{equation}
\end{assumption}
Thanks to \eqref{eq:SGLC} we can recover the value of the control along the singular arc:
\begin{equation*}
\wu(t) = \dfrac{ H_{232}}{\L}(\wla(t)) \qquad \forall t \in (\wtau{2}, T],
\end{equation*}
so that, by recurrence, one can easily prove that $\wu \in C^\infty([\wtau{2}, T], (0,1))$.  

 Notice that under  \eqref{eq:SGLC}, the second inequality in \eqref{eq:switch2} is equivalent to the discontinuity of the reference vector field at $t = \wtau{2}$. 

 For $\ell$ in a neighborhood of the range of the singular arc $\wla([\wtau{2}, T])$ in $T^*M$ we can define the {\em Hamiltonian feedback control} 
\begin{equation}
u_\cS(\ell) := \dfrac{H_{232}}{\L} (\ell).
\label{eq:feedback}
\end{equation} 
Notice that $\wla$ also satisfies the autonomous differential equation
\begin{equation}
\label{eq:auto}
\dot\la(t) = \left( \vH{2} + \overrightarrow{ u_\cS F_1} \right)(\la(t)) .
\end{equation} 
The condition $\wu(t) \in (0,1)$  reads
\begin{equation}
H_{232}(\wla(t)) > 0, \quad H_{323}(\wla(t))> 0 \qquad \forall t \in (\wtau{2}, T]. \label{eq:segni}
\end{equation}
\subsection{The extended second variation}
\label{sec:2ndvar}
The sufficient conditions will be derived
studying a sub problem of the given one. 
Namely we consider problem \eqref{eq:problema}, the reference vector field $\fref{t}$  and allow only for perturbations of $\wu$ on the singular interval $\left(\wtau{2}, T\right)$ and for perturbations of the switching time $\wtau{1}$. Following the ideas of  \cite{PS11} the subproblem can be written as
%
%
\begin{subequations}\label{eq:subpbu0}
\begin{align}
& \text{Minimize }  \b(\xi(T)) \ \text{ subject to} \\
& \dot\xi(t) = \begin{cases}
\up_0(t) h_1(\xi(t)) \quad & t \in (0, \wtau{1}) , \\
\up_0(t) h_2(\xi(t)) \quad & t \in (\wtau{1}, \wtau{2}) , \\
h_2(\xi(t)) + \up(t)f_1(\xi(t)) \quad & t \in (\wtau{2}, T) ,
\end{cases} \\
& \up_0(t) > 0, \ \int_0^{\wtau{2}} \up_0(t)\ud t = \wtau{2}, \quad \up(t) \in (0, 1), \\
& \xi(0) = x_0 .
\end{align}
\end{subequations}

Set
\begin{equation}
g_{t} :=  \wSinv{t*} f_1 \circ \wS{t} \, ,\ t\in [ \wtau{2},T] , \quad 
k_i :=  \wSinv{\wtau{1} *} h_i \circ \wS{\wtau{1}}, \ i=1,2, \quad k := k_1 - k_2,
\label{eq:pushvf}
\end{equation}
i.e.~$g_t$  is the push-forward of $\fS$ from time $t \in [\wtau{2}, T]$ to time $T$ while the $k_i$-s are the push-forward of the $h_i$-s from the first switching time $\wtau{1}$ to $T$.
With this notation the second variation of \eqref{eq:subpbu0} is given by 
\begin{equation}
J\se  [(\dx, \delta\up_0(\cdot),  \delta\up(\cdot))]^2 =  
\int_{\wtau{2}}^{T}  \delta\up(t) \liedede{\delta\eta(t)}{g_{t}}{\b}{\wxf} \, dt  + \dfrac{\e_0^2}{2} \left( \liededo{k}{\b}{\wxf} + H_{12}(\lref_{1}) \right)
\label{eq:secvarfina}
\end{equation}
subject to
\begin{equation*} 
\dot{\delta\eta}(t) =  \delta\up(t) g_{t}(\wxf),\qquad
\delta\eta(T) = \dx \in T_{\wxf}M, \qquad \delta\eta(\wtau{2}) = \e_0 k (\wxf)
\end{equation*}
where
\[
\e_0=\int_0^{\wtau{1}}\delta \up_0 (t)\, dt =-  \int_{\wtau{1}}^{\wtau{2}}\delta \up_0 (t)\, dt.
\]
The precise construction will appear in \cite{PS17}.  We point out that the perturbation at the switching time $\wtau{1}$ gives rise to a cost in the accessory problem.

 We then  extend the second variation to a new quadratic form called {\em extended second variation}. 
 Following the same lines as in the appendix of \cite{PS11} and setting
\[
w(t) := \displaystyle\int_{t}^{\wtau{2}} \delta u(s) \ud s, \qquad
\e_1 := w(T),
\] 
the extended second variation of \eqref{eq:subpbu0} is given by the following singular LQ problem on the interval $[ \wtau{2},T]$.
\begin{equation} \label{eq:secvarfin}
\begin{split}
& J\se_\ext [(\dx, \e_0, \e_1 ,w)]^2 =  
- \e_1 \liedede{\dx}{f_1}{c}{\wxf} 
- \frac{\e_1^2}{2} \liededo{f_1}{c}{\wxf} 
+ \\
& + \dfrac{\e_0^2}{2} \left( \liededo{k}{c}{\wxf}  
+ H_{12}(\lu)\right) + \frac{1}{2}\int_{\wtau{2}}^T  \left(2\, w(t) \liedede{\zeta(t)}{\dot g_{t}}{c}{\wxf} + w(t)^2 R(t)  \right)  \, \ud t 
\end{split}
\end{equation}
subject to
\begin{equation}
\dot{\zeta}(t) =  w(t) \dot g_{t}(\wxf),\qquad
\zeta(\wtau{2}) =\e_0 \, k(\wxf),\qquad \zeta(T)=\dx +\e_1 f_1(\wxf) .   \label{eq:zeta}
\end{equation}
This means that we consider the quadratic form $J\se_\ext$ defined by \eqref{eq:secvarfin} on the linear space called {\em space of admissible variations} given by 
\begin{multline*}
\qquad \cW_\ext  := \{ (\dx, \e_0,\e_1, w)  \in T_{\wxf}M \times \R \times \R \times L^2([\wtau{2}, T]) \colon \\
\text{system \eqref{eq:zeta} admits a solution} \}.
\qquad
\end{multline*}
Notice that 
\begin{equation}
\dot g_{t} =\wSinv{t*} h_{23} \circ \wS{t}, \quad  t \in [ \wtau{2},T] .\label{eq:dotg} 
\end{equation}

Choosing $(\dx, \e_0, \e_1, w(\cdot)) = (-\fS(\wxf), 0, 1, 0)$ in \eqref{eq:secvarfin} we get $\liededo{\fS}{\b}{\wxf} > 0$ as a necessary condition for the coercivity of the extended second variation \eqref{eq:secvarfin} on $\cW_\ext$.

Let $\cO(\wxf)$ be a neighborhood of $\wxf$ in $M$ and consider the set
\[
\wt M := \left\{ x \in \cO(\wxf) \colon \liede{\fS}{\b}{x} = 0\right\} .
\]
If $\liededo{\fS}{\b}{\wxf} > 0$, then $\wt M$ is a hypersurface such that
\[
T_{\wxf}\wt M   =  \left\{
\dz \in T_{\wxf}M \colon \liedede{\dz}{\fS}{\b}{\wxf} = 0
\right\}.
\]
For $x = \exp(r\fS)(z)$, $z \in \wt M$ set
\[
\wtc(x) := \b(z),
\]
i.e.~we extend $\left. c \right\vert_{\wt M}$ as a constant function along the integral lines of $f_1$. If $\cO(\wxf)$ is sufficiently small, then the function $\wtc \colon \cO(\wxf) \to \R$ is smooth and it enjoys the following properties
\begin{equation}
\begin{alignedat}{2}
& \wtc(\wxf) = c(\wxf), \quad && \ud\wtc(\wxf) = \ud c(\wxf),  \\ 
& \wtc(x) \leq c(x), \quad && \liede{f_1}{\wtc}{x} = 0 \quad \forall x \in \cO(\wxf).
\end{alignedat}
\label{eq:ctilde}
\end{equation}
Following \cite{PS11} it can be shown that  the coercivity of \eqref{eq:secvarfin} on $\cW_\ext$ is equivalent to $\liededo{f_1}{c}{\wxf} > 0$ plus the coercivity of 
\begin{equation}
\label{eq:Jtilde}
\begin{split}
\widetilde J_\ext [(\dx,\e_0,w)]^2 =& 
\dfrac{\e_0^2}{2}  \left( \liededo{k}{\wtc}{\wxf} + H_{12}(\lu)\right) + \\
&  + \frac{1}{2} \int_{\wtau{2}}^{T}   \left( %
 2\, w(t) \liedede{\zeta(t)}{\dot g_{t}}{\wtc}{\wxf}  + R(t) w(t) ^2  %
\right)  \ud t
\end{split}
\end{equation}
subject to 
\begin{equation}\label{eq:ridotte2}
\dot{\zeta}(t) =  w(t) \dot g_{t}(\wxf),\quad
\zeta(\wtau{2}) =\e_0 \, k(\wxf),\quad \zeta(T)=\dx \in  T_{\wxf}M.
\end{equation}
In the case when $\liede{f_1}{c}{\cdot} \equiv 0$ in $\cO(\wxf)$,  we set  $\wtc := c $. Thus, also in this case, we end up with \eqref{eq:Jtilde} subject to \eqref{eq:ridotte2}.
%
\begin{assumption}\label{ass:coerc}
We assume the following conditions hold
\begin{enumerate}
\item The quadratic form  $\widetilde J_\ext$, \eqref{eq:Jtilde}, is coercive on 
\begin{multline*}
\wt\cW_\ext := \{ (\dx, \ep_0,w) \in T_{\wxf}M   \times \R  \times L^2([\wtau{2}, T], \R) \colon
\\
\text{system \eqref{eq:ridotte2} admits a solution} \} .
\end{multline*}
\item Either $\liededo{f_1}{c}{\wxf} > 0$ or $\liede{f_1}{c}{\cdot} \equiv 0$ in a neighborhood $\cO(\wxf)$ of $\wxf$ in $M$.
\end{enumerate}
\end{assumption}

\subsection{The main result}
We can now state the main result of this paper
\begin{theorem}
\label{thm:main}
Let $\wxi$ be the admissible trajectory defined in \eqref{eq:terzo}. Assume that $\wxi$ satisfies Assumptions \ref{ass:PMP}--\ref{ass:coerc}. Then $\wxi$ is a strict strong local optimal trajectory of \eqref{eq:problema}.
\end{theorem}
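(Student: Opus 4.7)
The overall strategy is the five--step Hamiltonian scheme outlined in the introduction: construct a backward Hamiltonian flow $\cH_t$ in $T^*M$ and a horizontal Lagrangian submanifold $\Lambda \subset T^*M$ through $\lf$ such that the composed map
\[
\Phi_t := \pi \circ \cH_t \colon \Lambda \to M
\]
is a local diffeomorphism covering a $C^0$--neighbourhood $\cU$ of the graph of $\wxi$. Once this is achieved, any competitor $\xi$ with graph in $\cU$ can be lifted to a covector curve $\la(t) := \cH_t\bigl(\Phi_T^{-1}(\xi(T))\bigr)$, and its cost compared to the reference one by exploiting PMP together with the exactness of the canonical one--form $\unoforma$ on $\Lambda$.

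First I would build a maximised Hamiltonian $\cHmax_t$ in a neighbourhood of $\wla(t)$: on the bang intervals it equals $H_1$ and $H_2$, while on the singular interval $(\wtau{2},T]$ it equals $H_2 + u_\cS F_1$ with $u_\cS$ the smooth feedback in \eqref{eq:feedback}. Assumption~\ref{ass:SGLC} ensures that $u_\cS$ is smooth, hence the singular branch of $\cHmax_t$ is smooth, and Assumptions~\ref{ass:regbang}--\ref{ass:regsing} provide a uniform gap between $\cHmax_t(\ell)$ and $\scal{\ell}{X}$ for $X \in \mathcal{X}(\pi\ell)$ outside a small neighbourhood of $\wla(t)$. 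Integrating the associated non--autonomous Hamiltonian vector field backwards from $T$ yields $\cH_t$ on each arc; the strict inequalities in \eqref{eq:switch2} imply that the hypersurfaces $\{H_1 = H_2\}$ and $\{F_1 = 0\}$ are crossed transversally at $\lu$ and $\ld$, so $\cH_t$ extends to a well--defined continuous flow on a neighbourhood of $\wla([0,T])$.

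For the initial Lagrangian submanifold I would exploit the surrogate cost $\wtc$ built in \eqref{eq:ctilde} and set
\[
\Lambda := \{ -\ud\wtc(x) \colon x \in \cO(\wxf) \} ,
\]
which is horizontal and passes through $\lf=-\ud c(\wxf)=-\ud\wtc(\wxf)$. The central step, and the one I expect to be the main obstacle, is to show that $T_\lf \Phi_t$ is invertible for every $t \in [0,T]$. Following the identifications hinted at in Sec.~\ref{sec:consequence}--\ref{sec:antiso}, this is done by identifying, via a suitable anti--symplectic isomorphism $T_\lf T^*M \cong T_{\wxf}M \oplus T^*_{\wxf}M$, the linearisation $T_\lf \cH_t$ along $\wla$ with the linear Hamiltonian flow of the LQ accessory problem associated with $\wt J_\ext$. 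Under this identification, a non--trivial element of $\ker T_\lf \Phi_t$ would produce an admissible variation $(\dx,\e_0,w) \in \wt\cW_\ext$ on which $\wt J_\ext$ vanishes, contradicting the coercivity in Assumption~\ref{ass:coerc}(1). Some care is required at the switching times, where the jump of the maximised Hamiltonian produces precisely the boundary terms $\tfrac12 \e_0^2 H_{12}(\lu)$ in \eqref{eq:Jtilde} and the initial condition $\zeta(\wtau{2}) = \e_0 k(\wxf)$ in \eqref{eq:ridotte2}, via the Poisson jump of the antisymplectic identification across $\lu$.

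Once $\Phi_t$ is a local diffeomorphism uniformly in $t$, a standard compactness argument yields a $C^0$--neighbourhood $\cU$ of the graph of $\wxi$ covered injectively by $\{(t,\Phi_t(\ell)) \colon \ell \in \Lambda \cap V\}$ for a suitable neighbourhood $V$ of $\lf$. Given an admissible $\xi$ with graph in $\cU$, lift it to $\la(t) := \cH_t\bigl(\Phi_T^{-1}(\xi(T))\bigr)$ and use that $-\wtc$ is a primitive of $\unoforma|_\Lambda$ together with the maximization property of $\cHmax_t$ to obtain
\[
c(\xi(T)) - c(\wxf) \; \ge \; \wtc(\xi(T)) - \wtc(\wxf) \; = \; \int_0^T \bigl( \cHmax_t(\la(t)) - \scal{\la(t)}{\dot\xi(t)} \bigr) \, \ud t \; \ge \; 0 ,
\]
with strict inequality unless $\xi \equiv \wxi$, thanks to Assumption~\ref{ass:coerc}(2) and to the strict gap in Assumptions~\ref{ass:regbang}--\ref{ass:regsing}. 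This establishes the strict strong local optimality of $\wxi$.
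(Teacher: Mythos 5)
Your overall scheme (backward flow emanating from the horizontal Lagrangian $\Lambda=\{\ud(-\wtc)(x)\}$, invertibility of the projected flow via the identification of its linearisation with the LQ accessory flow and the coercivity of $\wt J$, Clarke's theorem at the bang--bang switch, exactness of $p\,\ud q-H_t\,\ud t$ on $[0,T]\times\Lambda$, and the final reduction from $c$ to $\wtc$ through \eqref{eq:ctilde}) is the same as the paper's. But there is a genuine gap at the heart of the construction: on the singular interval you take as Hamiltonian the smooth feedback selection $H_2+u_\cS \FS$, with $u_\cS$ as in \eqref{eq:feedback}, and call it the ``maximised'' Hamiltonian. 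It is not. Off the surface $\Sigma=\{\FS=0\}$ one has, for an admissible velocity $h_2+s\fS$,
\begin{equation*}
\scal{\ell}{h_2(\pi\ell)+s\fS(\pi\ell)}-\bigl(H_2+u_\cS \FS\bigr)(\ell)=\bigl(s-u_\cS(\ell)\bigr)\FS(\ell),
\end{equation*}
which is positive for suitable $s\in[0,1]$ whenever $\FS(\ell)\neq 0$, since $u_\cS(\ell)\in(0,1)$ near the reference. The lift $\la(t)$ of a generic competitor does not stay on $\Sigma$, so the integrand $\cHmax_t(\la(t))-\scal{\la(t)}{\dot\xi(t)}$ in your final display has no sign and the concluding inequality is unjustified. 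This is exactly the obstruction the paper points out (``the presence of a singular arc prevents us from using the maximized Hamiltonian''), and it is why the flow is built instead from the \emph{overmaximised} Hamiltonian $H_t=\wt H_2+\wu(t)\FS$ of \eqref{eq:Htdef}, with $\wt H_2=H_2\circ\exp\theta\vF{1}$ and the dominance and invariance properties of Lemma \ref{le:invarianze}; note also that the paper uses the open-loop reference control $\wu(t)$, not the feedback $u_\cS$, which only reappears in the strictness argument of Theorem \ref{thm:main1}.

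A second, related omission is the behaviour at the second switching time. You dismiss it with a transversality remark based on \eqref{eq:switch2}, but the difficulty there is not a transversal crossing of $\{\FS=0\}$: backwards from a point $\ell$ near $\ld$ the flow of $\vH{2}$ is maximising only when $H_{23}(\ell)\ge 0$, so the paper must introduce the corrected switching time $\tau_2(\ell)=\min\{t_2(\ell),\wtau{2}\}$ and keep the flow on $\Sigma$ (via $\overrightarrow{\wt H_2}$) when $H_{23}(\ell)<0$, see \eqref{eq:flusso}. Without this correction (or the overmaximisation) the comparison argument of \eqref{eq:circuito1} breaks down on $[\tau_2,\wtau{2}]$, and the resulting flow is only Lipschitz, which is also why the invertibility at $t=\wtau{1}$ requires the piecewise-linear analysis with Clarke's theorem and the inequality \eqref{eq:posperp2}, rather than the single-valued kernel argument you sketch. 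Your statement that a nontrivial kernel of $T_{\lf}(\pi\cH_t)$ would contradict coercivity is the right spirit for $t\in[\wtau{2},T]$, but as written it does not cover the two-sided linearisation at $\wtau{1}$ nor the quadratic term $H_{12}(\lu)$ coming from the variation of the first switching time.
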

More precisely we prove that Assumptions \ref{ass:PMP}-\ref{ass:SGLC} plus 1.~of Assumption \ref{ass:coerc} imply that $\wxi$ is a strict strong locally optimal trajectory for the cost $\wtc(\xi(T))$. This concludes the proof in the case   $\liededo{\fS}{c}{\cdot} \equiv 0$.

When $\liededo{\fS}{c}{\wxf} > 0$, $c=\wtc$ on $\wt M$, hence we have to compute the difference  $c-\wtc$ along the integral lines of $f_1$ starting at $z\in\wt M$, so that \eqref{eq:ctilde} easily gives the claim. 
\section{Hamiltonian approach}
The first step in applying the Hamiltonian approach described in the Introduction, is the construction of an overmaximised Hamiltonian flow.
Indeed the presence of a singular arc prevents us from using the maximized Hamiltonian (see \cite{PS11}) which can be used in the classical case, i.e.~when it is $C^2$, see \cite{AS04}.
The overmaximized Hamiltonian was introduced in \cite{Ste07} and then used in \cite{PS11, PS13b} . In \cite{SZ16} the authors give a sistematic extension of the classical techniques to the 
case of an overmaximized Hamiltonian whose flow is only Lipshitz continuous.
\subsection{The overmaximised flow}\label{sec:overflow}
The \eqref{eq:SGLC} condition (Assumption \ref{ass:SGLC}) implies that there exists a neighborhood $\cO_{\rm s}$ of the range of the singular arc $\wla([\wtau{2}, T])$ in $T^*M$ such that
\begin{equation*}
\Sigma := \left\{
\ell \in \cO_{\rm s} \colon F_1(\ell) = 0
\right\} = \left\{
\ell \in \cO_{\rm s} \colon H_2(\ell) = H_3(\ell)
\right\} 
\end{equation*}
and
\begin{equation*}
\cS := \left\{
\ell \in \Sigma \colon H_{23}(\ell) = 0
\right\} = \left\{
\ell \in \cO_{\rm s} \colon H_2(\ell) = H_3(\ell), \ H_{23}(\ell) = 0
\right\} 
\end{equation*}
are smooth simply connected manifolds of codimension $1$ and $2$, respectively. More precisely $\vH{23}$ is transverse to $\Sigma$ in $\cO_{\rm s}$, while $\vF{1}$ is tangent to $\Sigma$ and transverse to $\cS$, see \cite{PS11}.  

Here we want to describe how the regularity conditions allow to define in a tubular neighborhood $\cO$ of the graph of $\wla$ in $[0, T] \times T^*M$, a time-dependent Hamiltonian function $H \colon \cO \to \R$ whose flow satisfies the assumptions stated in \cite{SZ16}. The coercivity of the second variation will then guarantee the invertibility of the projected overmaximised flow of such Hamiltonian. %

In \cite{PS11} the authors prove that possibly restricting $\cO_{\rm s}$, the following implicit function problem has a solution $\theta \colon \cO_{\rm s} \to \R$:
\begin{equation*} 
\theta(\ell) \colon
\begin{cases}
H_{23} \circ \exp \theta\vF{1}(\ell) = 0, \\
\theta(\ell) = 0 \quad \text{if } H_{23}(\ell) = 0,
\end{cases}
\end{equation*}
and 
\[
\scal{\ud\theta(\lS)}{\dl} = \dfrac{- \, \dueforma{\dl}{\vH{23}(\lS)}}{\L(\lS)}
\qquad \forall \lS \colon H_{23}(\lS) = 0.
\]
Let  
\[
\wt{H}_2(\ell) := H_2 \circ \exp \theta(\ell)\vF{1}(\ell).
\]
From the results in \cite{PS11} we can derive the following Lemma:
\begin{lemma}\label{le:invarianze}
Possibly restricting $\cO_{\rm s}$ the following properties hold
\begin{enumerate}
\item $\wt{H}_2(\ell) \geq H_2(\ell)$ for any $\ell \in \Sigma$.
Equality holds if and only if $\ell \in \cS$.
\item For any $ \lS \in \cS$
\[
\uD\left( \wt{H}_2 - H_2 \right)(\lS) = 0 , \quad 
\uD^2 \left( \wt{H}_2 - H_2 \right)(\lS) = \dfrac{\left(\dueforma{\dl}{\vH{23}(\lS)}\right)^2}{\L(\lS)}.
\]
 \item $\overrightarrow{\wt H_2}$ and hence $\overrightarrow{\wt H_2} + \wu(t)\vFS$ are tangent to $\Sigma$ for any $t \in [\wtau{2}, T]$.
\end{enumerate}
Set
\begin{equation}\label{eq:Htdef}
H_t(\ell) = \wt H_2 + \wu(t)\FS\qquad \forall (t, \ell) \in [\wtau{2}, T] \times \cO_{\rm s}
\end{equation}
\begin{enumerate}
\item[4. ] $\left. \wla \right\vert_{[\wtau{2}, T]}$ is the solution of the Cauchy problem
\[
\dot\la(t) = \vH{t}(\la(t)), \quad \la(T) = \lf.
\]
\end{enumerate}
Moreover the following invariant properties hold:
\begin{enumerate}
\item[5.] $\vH{2}$ is invariant with respect to the flow of $\vH{t} = \overrightarrow{\wt{H}}_{2} + \wu(t)\vF{1}$ along the singular arc of the reference trajectory:
\[
\vH{2}(\wla(t)) = \cH_{t*}\vH{2}(\lf) \qquad t \in [\wtau{2}, T];
\]
\item[6.] $\vF{1}$ is invariant on $\Sigma$ with respect to the flow of $\vH{t}$:
\[
\vF{1}\circ\cH_t(\ell) = \cH_{t*}\vF{1}(\ell), \quad \forall \ell \in \Sigma, \quad t \in [\wtau{2}, T].
\]
\end{enumerate}
\end{lemma}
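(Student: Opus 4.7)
The plan is to reduce each item to a one-dimensional analysis along the orbits of $\vF{1}$, supplemented by Poisson-bracket manipulations that exploit the overmaximised structure. The starting observation is that for every $\ell\in\cO_{\rm s}$ the one-variable function $s\mapsto H_{2}\circ\exp(s\vF{1})(\ell)$ has first and second derivatives
\[
\frac{\ud}{\ud s}H_{2}\circ\exp(s\vF{1})(\ell)=-H_{23}\circ\exp(s\vF{1})(\ell),\qquad
\frac{\ud^{2}}{\ud s^{2}}H_{2}\circ\exp(s\vF{1})(\ell)=-\L\circ\exp(s\vF{1})(\ell).
\]
By \eqref{eq:SGLC}, after possibly shrinking $\cO_{\rm s}$, the second derivative is strictly negative, so this curve is strictly concave with unique critical point at $s=\theta(\ell)$. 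Item~1 is then immediate: its maximum $\wt H_{2}(\ell)$ coincides with $H_{2}(\ell)$ iff $\theta(\ell)=0$, iff $H_{23}(\ell)=0$, iff $\ell\in\cS$. For item~2 I will Taylor-expand along the $\vF{1}$-flow to obtain $(\wt H_{2}-H_{2})(\ell)=\tfrac12\theta(\ell)^{2}\L(\ell)+O(\theta(\ell)^{3})$; since $\theta$ vanishes on $\cS$, differentiating at $\lS\in\cS$ and using the quoted formula $\ud\theta(\lS)=-\dueforma{\cdot}{\vH{23}(\lS)}/\L(\lS)$ delivers both the vanishing of $\uD(\wt H_{2}-H_{2})(\lS)$ and the stated expression for its Hessian.

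For item~3 I will prove the stronger global identity $\poissonbr{F_{1}}{\wt H_{2}}\equiv 0$ on $\cO_{\rm s}$. Since the flow of $\vF{1}$ commutes with itself, for every $r$
\[
\wt H_{2}\circ\exp(r\vF{1})(\ell)=H_{2}\circ\exp\!\bigl((\theta(\exp r\vF{1}\,\ell)+r)\vF{1}\bigr)(\ell),
\]
and differentiating at $r=0$ produces as a factor $-H_{23}\circ\exp(\theta(\ell)\vF{1})(\ell)=0$, so the derivative vanishes. Hence $\overrightarrow{\wt H_{2}}$ is tangent to $\Sigma=\{F_{1}=0\}$; since $\vF{1}$ is also tangent to $\Sigma$, so is $\vH{t}=\overrightarrow{\wt H_{2}}+\wu(t)\vF{1}$. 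Item~4 then follows combining items~2 and~3: along the singular arc $\wla(t)\in\cS$, so $\uD(\wt H_{2}-H_{2})(\wla(t))=0$ and $\overrightarrow{\wt H_{2}}(\wla(t))=\vH{2}(\wla(t))$; this gives $\vH{t}(\wla(t))=\vH{2}(\wla(t))+\wu(t)\vF{1}(\wla(t))=\vFref{t}(\wla(t))$, which together with $\wla(T)=\lf$ identifies $\wla|_{[\wtau{2},T]}$ with the solution of the stated Cauchy problem.

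For items~5 and~6 I will use the Lie-derivative formula for the time-dependent flow $\cH_{t}$ together with the identity $\liebr{\vec F}{\vec G}=\overrightarrow{\poissonbr{F}{G}}$. Item~6 is immediate: from item~3, $\liebr{\vF{1}}{\overrightarrow{\wt H_{2}}}=\overrightarrow{\poissonbr{F_{1}}{\wt H_{2}}}=0$ on all of $\cO_{\rm s}$ and $\liebr{\vF{1}}{\vF{1}}=0$, so $\liebr{\vF{1}}{\vH{t}}=0$ and $\vF{1}$ is preserved by $\cH_{t}$ on $\cO_{\rm s}$, in particular on $\Sigma$. Item~5 is the main obstacle; it amounts to showing $\liebr{\vH{t}}{\vH{2}}(\wla(t))=0$, equivalently $\overrightarrow{\poissonbr{\wt H_{2}}{H_{2}}}(\wla(t))=\wu(t)\vH{23}(\wla(t))$. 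Using item~2 I will expand $\wt H_{2}-H_{2}=\tfrac{1}{2\L}H_{23}^{2}+O^{3}$ near $\cS$, compute
\[
\poissonbr{\wt H_{2}}{H_{2}}=\poissonbr{\wt H_{2}-H_{2}}{H_{2}}=\tfrac{H_{23}}{\L}\poissonbr{H_{23}}{H_{2}}+O(H_{23}^{2})=\tfrac{H_{23}H_{232}}{\L}+O(H_{23}^{2}),
\]
differentiate at $\wla(t)\in\cS$ (where $H_{23}=0$) to obtain $\ud\poissonbr{\wt H_{2}}{H_{2}}(\wla(t))=\tfrac{H_{232}}{\L}(\wla(t))\,\ud H_{23}(\wla(t))$, and recognise the coefficient $H_{232}/\L=\wu(t)$ via~\eqref{eq:feedback}. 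Combined with $\liebr{\vF{1}}{\vH{2}}=-\vH{23}$ this gives $\liebr{\vH{t}}{\vH{2}}(\wla(t))=0$.

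The main obstacle is therefore item~5: items~1--2 are strict concavity plus Taylor expansion, items~3 and~6 rely on a single chain-rule computation exploiting the commutativity of the $\vF{1}$-flow, and item~4 is an immediate consequence of items~2 and~3. In item~5 the second-order structure of $\wt H_{2}-H_{2}$ on $\cS$ must match exactly the singular feedback~\eqref{eq:feedback}, and this coincidence is precisely what makes the overmaximised Hamiltonian compatible with the singular reference dynamics.
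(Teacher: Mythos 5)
Your proof is correct. The paper itself gives no argument for this lemma -- it quotes it as a consequence of the results in \cite{PS11}, whose ingredients (the implicit function $\theta$, the formula for $\ud\theta(\lS)$, and the feedback $u_\cS=H_{232}/\L$) are exactly what you use -- so your write-up is a sound self-contained reconstruction of the intended derivation: strict concavity of $s\mapsto H_2\circ\exp(s\vF{1})(\ell)$ under \eqref{eq:SGLC} for items 1--2, constancy of $\wt H_2$ along the $\vF{1}$-orbits for items 3 and 6, equality $\overrightarrow{\wt H_2}=\vH{2}$ on $\cS$ for item 4, and the matching of the second-order expansion of $\wt H_2-H_2$ with the feedback \eqref{eq:feedback} for item 5. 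The only point to make explicit when writing it out in full is that the remainders in your expansions should be recorded as $H_{23}^2$ (respectively $H_{23}^3$) times a smooth factor -- which your construction does yield, since $\theta$ is a smooth multiple of $H_{23}$ -- so that differentiating at points of $\cS$ legitimately kills them in items 2 and 5.
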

This Lemma is the main tool for handling the singular arc. The bang arcs present a different kind of problems. Namely we need to define the {\em switching times} near the reference switching points $\lu$ and $\ld$ of the Pontryagin extremal $\wla$. 
In \cite{PS11} it is shown that the flow of $\vH{2}$ is the maximised one in a left hand side neighborhood of $\wtau{2}$ {\em if and only if} $H_{23}(\ell) \geq 0$.
In order to overcome this problem we introduce a { \em correction } of the backwards flow from time $\wtau{2}$ by {\em keeping the flow  on $\Sigma$ when $H_{23}(\ell) <0$}. 

By the implicit function theorem applied to the problem: 
\begin{equation*} 
 \begin{cases}
        H_{23} \circ \exp( t_2 - \wtau{2})\overrightarrow{\widetilde{H}}_{2}\left( \ell \right) = 0 , \\ 
        t_2(\ell) = \wtau{2} \quad \text{ if }H_{23}(\ell) = 0 .
      \end{cases}
    \end{equation*}
it is possible to define a function $t_2 \colon \cO(\ld) \to \R$ such that if $\ell \in \Sigma$, then $t_2(\ell) = \wtau{2}$ if and only if $\ell \in \cS$; moreover 
\[
\scal{\ud t_{2}(\ld)}{\dl} = \dfrac{- \, \dueforma{\dl}{\vH{23}(\ld)}}{H_{223}(\ld)}.
\]
We set
\begin{equation*}
\tau_2(\ell) := \min \left\{ t_2(\ell), \wtau{2} \right\} = \begin{cases}
t_2(\ell) \quad & \text{if } H_{23}(\ell) < 0 ,  \\
\wtau{2} & \text{if } H_{23}(\ell) \geq 0 .
\end{cases}
\end{equation*}
The next step will be the definition of the switching time $\tau_1 \colon \cO(\ld) \to \R$.  
Actually, the implicit function theorem applies also to
\begin{equation*} 
\begin{cases}
\left( H_2 - H_1 \right) \circ \exp \left(\tau_1 -\tau_{2}(\ell) \right)\vH{2} \circ\exp \left(\tau_{2}(\ell) - \wtau{2} \right)\overrightarrow{\wt H_{2}}(\ell) = 0, \\
\tau_1(\ld) = \wtau{1},
\end{cases}
\end{equation*}
see \cite{ASZ02b} and 
\begin{equation}
\scal{\ud\tau_{1}(\ld)}{\dl} = \dfrac{- \, \dueforma{\exp\left(\wtau{1} -\wtau{2} \right)\vH{2}_{\, *}\dl}{\left( \vH{2} - \vH{1}\right)(\lu)}}{H_{12}(\lu)}. \label{eq:dtau1}
\end{equation}
We can now define the flow $(t, \ell) \mapsto \cH_t(\ell)$ backwards in time emanating from a neighborhood $\cO(\lf)$ of $\lf$ in $T^*M$ at time $T$.
Namely, for any $t \in [\wtau{2}, T]$, $\cH_t(\ell)$ is the flow  associated to the time-dependent Hamiltonian defined in \eqref{eq:Htdef}.
Let $\wt\ell := \cH_{\wtau{2}}(\ell)$. For $t < \wtau{2}$, $\cH_t(\ell)$ is defined as
\begin{equation}
\cH_t(\ell) := 
\begin{cases}
\exp (t - \wtau{2})\overrightarrow{\wt H_2}(\wt\ell) \quad & t \in [\tau_2(\wt\ell), \wtau{2}], \\
\exp (t - \tau_{2}(\wt\ell))\vH{2}\circ\cH_{\tau_{2}(\wt\ell)}(\ell) \quad & t \in [\tau_1(\wt\ell), \tau_2(\wt\ell)), \\
\exp (t - \tau_{1}(\wt\ell))\vH{1}\circ\cH_{\tau_{1}(\wt\ell)}(\ell) \quad & t \in [0, \tau_1(\wt\ell)),
\end{cases}\label{eq:flusso}
\end{equation}
see Figure \ref{fig:Hami}.
\begin{figure}\label{fig:Hami}
\begin{tikzpicture}
	\pgftransformscale{.8} %
    \definecolor{gray1}{gray}{0.95} %
    \definecolor{gray2}{gray}{0.825} %
    \definecolor{gray3}{gray}{0.7} %
    \definecolor{gray4}{gray}{0.675} %
    \filldraw[gray4](0,  -2.8)--(4.5, -2.8)--(6.5, 2.8)--(0, 2.8); %
		\node[anchor=south] at (2.5, 0.5 ) {$H_1$}; %
    \filldraw[gray2](4.5,  -2.8)--(9, -2.8)--(10.5, 0)--(10.5, 2.8)--(6.5,2.8); %
		\node[anchor=south] at (8, 0.5 ) {$H_2$}; %
    \filldraw[gray3](9, -2.8)--(10.5, -2.8)--(10.5, 0); %
		\node[anchor=north] at (9.75, -2 ) {$\wt H_2$}; %
    \filldraw[gray1](10.5, -2.8)--(15, -2.8)--(15, 2.8)--(10.5, 2.8); %
		\node[anchor=south] at (13, 0.5 ) {$\wt H_2 + \wh\nu(t)F_1$ }; %
	\draw[thick](4.5, -2.8)--(6.5, 2.8) node [midway, sloped, above]{$t =\tau_{1}(\wt\ell)$}; %
	\draw[thick](5.5, -3)--(5.5, 3);%
		\node[anchor=south] at (5.5, 3 ) {$t = \wtau{1}$}; 
	\draw[thick](10.5, -3)--(10.5, 3); %
		\node[anchor=south] at (10.5, 3 ) {$t = \wtau{2}$}; 
	\draw[thick](15, -3)--(15, 3); %
			\node[anchor=south] at (15, 3 ) {$t = T$}; 
	\draw[thick](9, -2.8)--(10.5, 0); %
	\draw[thick, dashed](10.5, 0)--(11.25, 1.4); %
	\draw (9,-2.8)--(10.5,0)node [midway, sloped, above]{$t =\tau_{2}(\wt\ell)$};
	\draw [<->, very thick] (0,3.8) node (yaxis) [above] {$T^*M$} |-
        (16,0) node (xaxis) [right] {$t$}; %
    \draw[very thick](0,0)--(0, -2.8); %
\end{tikzpicture}
  \caption{The over--maximised Hamiltonian}
\end{figure}
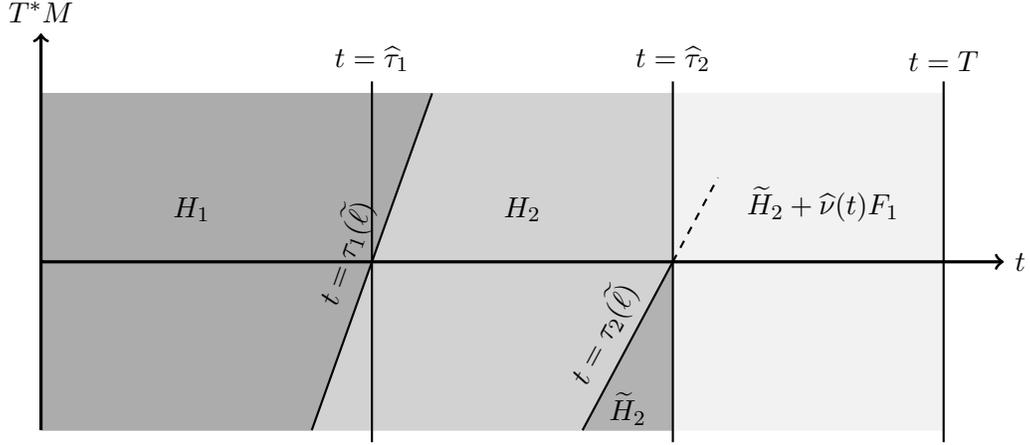
\begin{remark}
Notice that $\cH$ is $\cinf$ on $[\wtau{2}^+, T] \times \cO(\lf)$ and it is  Lipschitz continuous on $[0, \wtau{2}^-] \times \cO(\lf)$. 
%
\end{remark}
We now state and prove the main result obtained by the Hamiltonian approach, see \cite{SZ16}. After, we shall exploit the coercivity of $\wt J$ in order to obtain the required invertibility property.
\begin{theorem}\label{thm:main1}
Let $\Lambda := \left\{ \ud \, (-\wtc)(x) \colon x \in \cO(\wxf) \right\}$. Assume  the projected overmaximised flow emanating from $\Lambda$ is locally Lipschitz invertible onto a neighborhood $\cU$ of the graph of $\wxi$ in $[0, T] \times M$:
\begin{equation}\label{eq:mappa}
\id \times \pi\cH \colon (t, \ell) \in [0,T] \times \Lambda \mapsto (t, \pi\cH_t(\ell)) \in  \cU.
\end{equation}
Then $\wxi$ is a strict strong locally optimal trajectory for the cost $\wtc(\xi(T))$ subject to \eqref{eq:dinamica}-\eqref{eq:inizio}.
\end{theorem}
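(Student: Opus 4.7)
The strategy is the standard Hamiltonian sufficient--conditions technique, carried out in the Lipschitz--overmaximised setting of \cite{SZ16}. My plan is to use the hypothesised local invertibility of $\id\times\pi\cH$ to define a verification function $W\colon\cU\to\R$ whose spatial differential realises the lift to $\cH_t(\Lambda)$ and which satisfies a Hamilton--Jacobi identity, and then to compare the cost of any admissible competitor $\xi$ against $\wxi$ through the resulting Poincar\'e--Cartan invariant.

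Since $\Lambda$ is a horizontal Lagrangian in $T^*M$ and each $\cH_t$ is a (Lipschitz) symplectomorphism, $\cH_t(\Lambda)$ remains Lagrangian, and by hypothesis it is horizontal over a neighborhood of $\wxi(t)$. Hence it admits a Lipschitz generating function $S_t$, smooth on each of $[0,\wtau{1})$, $(\wtau{1},\wtau{2})$ and $(\wtau{2},T]$ and unique up to a time--dependent constant. Normalising by $S_T = -\wtc$ and matching constants across the switching surfaces so that $W(t,x) := S_t(x)$ is continuous in $t$, the standard derivation yields $\ud_x W(t,\cdot)$ equal to the one--form whose graph is $\cH_t(\Lambda)$ together with the Hamilton--Jacobi identity $\partial_t W + H_t(\ud_x W) = 0$ on $\cU$. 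Moreover $W$ is constant along the reference trajectory, since
\[
\tfrac{\ud}{\ud t}W(t,\wxi(t)) = -H_t(\wla(t)) + \scal{\wla(t)}{\wh f_t(\wxi(t))} = -H_t(\wla(t)) + \wh F_t(\wla(t)) = 0,
\]
using $\wt H_2(\wla) = H_2(\wla)$ on $\cS$ (point 1 of Lemma \ref{le:invarianze}); in particular $W(0,x_0) = W(T,\wxf) = -\wtc(\wxf)$.

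For any admissible $\xi$ with $\xi(0) = x_0$, control $\up(\cdot)$ and graph in $\cU$, setting $\ell_s := \ud_x W(s,\xi(s))$ and applying the chain rule together with the Hamilton--Jacobi identity gives
\[
\wtc(\wxf) - \wtc\bigl(\xi(T)\bigr) = W\bigl(T,\xi(T)\bigr) - W(0,x_0) = \int_{0}^{T} \Bigl[ \scal{\ell_s}{\dot\xi(s)} - H_s(\ell_s) \Bigr]\ud s .
\]
The integrand is non--positive by the overmaximisation of $H_t$: on each bang subinterval this is Assumption \ref{ass:regbang}, while on the singular subinterval Assumption \ref{ass:regsing} restricts $\max_{X\in\mathcal{X}}\scal{\ell}{X}$ to the edge $[h_2,h_3]$, and this maximum is in turn dominated by $\wt H_2(\ell) + \wu(t)F_1(\ell) = H_t(\ell)$ thanks to $\wt H_2 \geq H_2$ on $\Sigma$. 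The strictness in these regularity assumptions gives $\wtc\bigl(\xi(T)\bigr) > \wtc(\wxf)$ for any $\xi \neq \wxi$ with graph in $\cU$, which is the claim.

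The main obstacle is twofold. First, the overmaximisation inequality $H_t(\ell) \geq \scal{\ell}{X(\pi\ell)}$ must be propagated from the reference extremal to a full tubular $T^*M$--neighborhood; the pointwise strict inequalities along $\wla$ extend by continuity, with Assumption \ref{ass:switch} providing the room to localise near $\wtau{1}$ and $\wtau{2}$ so that no additional $X\in\mathcal{X}$ gets close to maximising. Second, $\cH$ is only Lipschitz across the switching surfaces, so $W$ is piecewise $C^\infty$ with a merely Lipschitz gluing; the chain rule applied to $W\bigl(s,\xi(s)\bigr)$ then holds only almost everywhere, and one must verify that no spurious boundary contribution appears at $s = \tau_1(\wt\ell)$ or $s = \tau_2(\wt\ell)$. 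The invariance properties (points 5--6 of Lemma \ref{le:invarianze}) together with the explicit formulas for $\ud t_2$ and $\ud\tau_1$ guarantee the $C^0$--continuity of $W$ across these times and show that the one--sided Hamilton--Jacobi identities agree across them, placing the argument precisely within the framework of \cite{SZ16}.
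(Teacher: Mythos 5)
Your first half follows essentially the same route as the paper: the paper integrates the exact Poincar\'e--Cartan form $\cH^*(p\,\ud q - H_t\,\ud t)$ over a closed loop in $[0,T]\times\Lambda$ instead of introducing a verification function $W$, but these are the same comparison argument, and both reduce local optimality for the cost $\wtc$ to the non-positivity of $\scal{\la(t)}{\dot\xi(t)}-H_t(\la(t))$ along the lifted competitor. One caveat even here: your domination of $\max_{X\in\mathcal X}\scal{\ell}{X}$ by $H_t(\ell)=\wt H_2(\ell)+\wu(t)F_1(\ell)$ ``thanks to $\wt H_2\geq H_2$ on $\Sigma$'' only makes sense if the lifted point $\ell=\la(t)$ actually lies on $\Sigma$; off $\Sigma$ the inequality generally fails, because $\wu(t)\in(0,1)$ gives $H_2+\wu F_1<\max(H_2,H_3)$ on one side of $\Sigma$ while $\wt H_2-H_2$ vanishes to second order at $\cS$. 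What saves the argument is that $\Lambda\subset\Sigma$ (since $\liede{f_1}{\wtc}{\cdot}\equiv 0$ by \eqref{eq:ctilde}) and that $\vH{t}$ is tangent to $\Sigma$ (point 3 of Lemma \ref{le:invarianze}), so $\cH_t(\Lambda)\subset\Sigma$ for $t\in[\wtau{2},T]$; you should say this explicitly rather than defer it to continuity or to \cite{SZ16}.

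The genuine gap is the strictness claim. You assert that ``the strictness in these regularity assumptions gives $\wtc(\xi(T))>\wtc(\wxf)$ for any $\xi\neq\wxi$,'' but Assumption \ref{ass:regsing} is \emph{not} strict along the singular arc: the maximisers there form the whole edge spanned by $h_2$ and $h_3$, and on $\Sigma$ one has $F_1=0$, so the integrand can vanish for competitor velocities $h_2+u\fS$ with $u\neq\wu(t)$ -- vanishing of the integrand does not by itself force $\dot\xi(t)=\wh f_t(\xi(t))$. This is precisely why the paper devotes the second half of its proof to the equality case: assuming $\wtc(\xi(T))=\wtc(\wxf)$, the integrand vanishes a.e.; on $[0,\wtau{2}]$ the strict bang regularity (Assumption \ref{ass:regbang}) and $\la(0)=\lo$ give $\la\equiv\wla$ there; on $[\wtau{2},T]$ vanishing forces $\wt H_2(\la(t))=H_2(\la(t))$, i.e.\ $\la(t)\in\cS$, then differentiation along the fiber shows $\dot\xi(t)-\pi_*\vH{t}(\la(t))=b(t)\fS(\xi(t))$, the invariance of $\vF{1}$ under the flow (point 6 of Lemma \ref{le:invarianze}) converts this into $\dot\mu(t)=b(t)\vF{1}(\mu(t))$, and finally $\dueforma{\dot\la(t)}{\vH{23}(\la(t))}=0$ on $\cS$ identifies $\wu(t)+b(t)$ with the feedback $u_\cS(\la(t))$, so that $\la$ and $\wla$ solve the same Cauchy problem \eqref{eq:auto} with datum $\ld$ and coincide, whence $\xi\equiv\wxi$. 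None of this uniqueness mechanism appears in your proposal, and without it the ``strict'' part of the theorem is unproved.
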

\begin{proof}
Clearly $(\id \times \pi\cH)^{-1}(t, \wxi(t)) = (t, \lf)$ for any $t \in [0, T]$.
Let $\xi \colon [0, T] \to M$  be an admissible trajectory for \eqref{eq:problema} whose graph is in $\cU$ and let
\[
(t, \mu(t)) := (\id \times \pi\cH)^{-1}(t, \xi(t)), \qquad
\la(t) := \cH_t(\mu(t)),
 \qquad t \in [0, T].
\]
Let $\varphi \colon  [0,1] \to \Lambda$  be a smooth curve such that $\varphi(0) = \mu(T)$, $\varphi(1) = \lf$. In $[0, T] \times \Lambda$ we can consider the closed path obtained by the concatenation of the curves $t \in [0, T] \mapsto (t, \mu(t))$, $s \in [0,1] \mapsto (T, \varphi(s))$ and of the curve $t \in [0, T] \mapsto (t, \lf)$ ran backwards in time.

Integrating the one-form $\omega := \cH^*\left( p \ud q - H_t \ud t \right)$ (which is exact on $[0, T] \times \Lambda$, see \cite{SZ16}, we obtain 
\begin{equation}
\begin{split}
0 = \oint \omega & = \int_{\id \times \mu}  \scal{\la(t)}{\dot\xi(t)} - H_t(\la(t))  \ud t + \int_\varphi \cH^*p \ud q \\ 
& - \int_{\id \times \lf}  \scal{\wla(t)}{\dot\wxi(t)} - H_t(\wla(t))  \ud t .
\end{split}\label{eq:circuito1}
\end{equation}
By construction of the overmaximised Hamiltonian $H_t$ the integrand is non positive along $\id \times \mu$ and is identically zero along $\id \times \lf$. Thus
\begin{multline}
\qquad 0 \leq \int_\varphi \cH^*p \ud q = \int_0^1 \scal{\varphi(s)}{\dds (\pi\varphi)(s)} \ud s \\ 
=  \int_0^1 \scal{\ud\, (-\wtc)(\pi\varphi(s))}{\dds (\pi\varphi)(s)} \ud s = \wtc(\xi(T)) - \wtc(\wxf).  \qquad
\label{eq:circuito2}
\end{multline}
Thus $\wtc(\xi(T)) \geq \wtc(\wxf)$, i.e.~the reference trajectory $\wxi$ is a strong local minimiser for the cost $\wtc$. Let us show that in fact it is a strict one.

If $\wtc(\xi(T)) = \wtc(\wxf)$, then \eqref{eq:circuito1}-\eqref{eq:circuito2} imply that 
\begin{equation}
\scal{\la(t)}{\dot\xi(t)} - H_t(\la(t)) = 0 \qquad \qo t \in [0,T].
\label{eq:strict1}
\end{equation}
Since $\xi(0) = x_0 = \wxi(0)$, we also have $\la(0) = \lo$ and from the regularity condition along the bang arcs, Assumption \ref{ass:regbang}, we easily get $\la(t) = \wla(t)$ for any $t \in [0, \wtau{2}]$, so that $\xi(t) = \pi\la(t) = \pi\wla(t) = \wxi(t)$ for any $t \in [0, \wtau{2}]$. In particular $\la(\wtau{2}) = \ld$.  

Moreover, for $t\in[\wtau{2}, T]$, equation \eqref{eq:strict1} yields $\wt H_2(\la(t)) = H_2(\la(t))$, i.e. $\la(t) \in \cS$. 
Let $\Sigma_{\wxi(t)}$ be the intersection of $\Sigma$ with the fiber over $\wxi(t)$ and consider the function 
\[\Delta \colon \ell \in \Sigma_{\wxi(t)} \mapsto \scal{\ell}{\dot\xi(t)} - H_t(\ell) \in \R.
\] 
By PMP the function $\Delta$ is non positive and by \eqref{eq:strict1} it is null in $\la(t)$. Differentiating $\Delta$ with respect to the vertical fiber we thus obtain
\begin{equation}
\scal{\dep}{\dot\xi(t) - \pi_*\vH{t}(\la(t))} = 0,  \quad 
\forall \dep \in T^*_{\xi(t)}M \text{, such that } \ \scal{\dep}{\fS(\xi(t))} = 0.
\label{eq:strict3}
\end{equation}
Hence there exists $b(t) \in \R$ such that
\[
\dot\xi(t) = \pi_*\vH{t}(\la(t)) + b(t) \fS(\xi(t)) \qquad \forall t \in [\wtau{2}, T].
\]
Hence, by Lemma \ref{le:invarianze}, point 6,
\[
\dot\mu(t) = \left( \pi\cH_t \right)^{-1}_* \left( \dot\xi(t) - \pi_*\vH{t}(\la(t)) \right) = b(t) \left( \pi\cH_t \right)^{-1}_*\fS(\xi(t)) =
b(t) \vFS(\mu(t)).
\]
Thus
\[
\dot\la(t) = \vH{t}(\la(t)) + \cH_{t *}\dot\mu(t) = \vH{2}(\la(t)) + \left(\wu(t) + b(t) \right) \vFS(\la(t)).
\]
Finally, since $\la(t) \in \cS$, we get
\begin{equation}
0 = \dueforma{\dot\la(t)}{\vH{23}(\la(t))} = - H_{232}(\la(t)) + \left(\wu(t) + b(t) \right)  \L(\la(t)) .
\label{eq:strict4}
\end{equation}
Comparing \eqref{eq:strict4} with \eqref{eq:feedback} we obtain 
\[
\wu(t) + b(t) = u_\cS(\la(t)), 
\]
so that $\la(t)$ and $\wla(t)$ solve the same Cauchy problem on the interval $[\wtau{2}, T]$:
\[
\dot\la = \vH{2}(\la) + u_\cS(\la) \vFS(\la), \qquad \la(\wtau{2}) = \ld.
\]
Hence $\la \equiv \wla$ and $\xi \equiv \wxi$. This proves that $\wxi$ is a strict strong locally optimal trajectory for the cost $\wtc(\xi(T))$.
\end{proof}

\subsection{Consequences of the coercivity of $\wt J$}\label{sec:consequence}
In this section we exploit the coercivity of the second variation, Assumption \ref{ass:coerc} {\em a)}. 
Let $\Lambda := \left\{
\ud \, (-\wtc)(x) \colon x \in \cO(\wxf)
\right\}$.

Assume $k(\wxf) \neq 0$, i.e.~$h_1(\wxu) \neq h_2(\wxu)$. In order to rewrite the extended second variation \eqref{eq:Jtilde} as a standard LQ form, choose $\omega \in T^*_{\wxf}M$ such that $\scal{\omega}{k(\wxf)} = 1$ and set
\[
\gtause := H_{12}(\lu)\omega\otimes\omega - \frac{1}{2}\left( \omega\otimes  \liedede{(\cdot)}{k}{(-\wtc)}{\wxf}
+ \liedede{(\cdot)}{k}{(-\wtc)}{\wxf} \otimes\omega \right) .
\]
We obtain
\begin{equation}
\wt J [(\dx,w)]^2 = \frac{1}{2}\gtause[\zeta(\wtau{2})]^2+\frac{1}{2}\int_{\wtau{2}}^{T}  \left( w(t) ^2 R(t)+
 2\, w(t) \liedede{\zeta(t)}{\dot g_{t}}{(-\wtc)}{\wxf}  \right)  \ud t 
\label{eq:jsebis}
\end{equation}
subject to
\begin{equation}\label{eq:zetabis}
\dot{\zeta}(t) =  w(t) \dot g_{t}(\wxf),\quad
\zeta(\wtau{2}) = 
 \delta y\in\R\, k(\wxf), 
\quad \zeta(T)=\dx\in T_{\wxf}M.
\end{equation}
If $k(\wxf) = 0$ then $\zeta(\wtau{2}) = 0$, so that in \eqref{eq:Jtilde} $\e_0$ is a decoupled variable and $J_{\ext}$ is equivalent to the problem described by \eqref{eq:jsebis}-\eqref{eq:zetabis} whatever the quadratic form $\gtause$ is.

Consider the Lagrangian subspace of trasversality conditions
\[
L\se_T=\left\{(0,\dx)\colon \dx\in  T_{\wxf}M\right\}.
\]
Let
\[
\cW := \left\{
(\dx, w) \in T_{\wxf}M \times L^2([\wtau{2}, T]) \colon \text{system \eqref{eq:zetabis} admits a solution}
\right\}
\]
and consider the subspace of $\cW$
\[
\cV  := \left\{\de=(\dx,w)\in \cW \colon \zeta ( \wtau{2}) = 0 \right\}.
\]
Notice that $\cV = \cW$ if and only if $k(\wxf) = 0$. 
It can be easily shown, see \cite{Hes51}, that
\begin{proposition}
$\wt J$ is coercive if and only if $\wt J$ is coercive on $\cV$ and $\wt J [\de]^2>0$ for any $\de\in\cW$, $\de\not= 0$, which is $\wt J$-orthogonal to $\cV$.
\end{proposition}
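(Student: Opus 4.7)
The forward implication is immediate: if $\wt J$ is coercive on all of $\cW$, then it is certainly coercive on the subspace $\cV$, and its restriction to any nontrivial vector is strictly positive. So the entire content of the statement lies in the converse.

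For the converse, the plan is a classical Hestenes-type decomposition. First observe that $\cV$ has finite codimension in $\cW$: the map $\de \mapsto \zeta(\wtau{2}) \in \R\, k(\wxf)$ is linear with kernel $\cV$, so $\dim(\cW/\cV) \leq 1$. Next, assuming $\wt J$ is coercive on $\cV$, the restriction $\wt J|_{\cV}$ defines a symmetric, continuous, coercive bilinear form on the Hilbert space $\cV$, hence an inner product equivalent to the ambient one. I would then use this to construct a bounded projection $P \colon \cW \to \cV$ characterized by the $\wt J$-orthogonality condition: for each $\de \in \cW$, $P\de$ is the unique element of $\cV$ such that $\wt J(P\de - \de, \, v) = 0$ for every $v \in \cV$. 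Existence and uniqueness follow from Lax--Milgram applied on $\cV$ to the bounded linear functional $v \mapsto \wt J(\de, v)$; boundedness of $P$ is a consequence of coercivity.

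With $P$ in hand, write every $\de \in \cW$ as $\de = P\de + (\de - P\de)$, where $P\de \in \cV$ and $\de - P\de$ is $\wt J$-orthogonal to $\cV$, hence lies in the finite-dimensional space $\cV^{\jperp} \cap \cW$ (of dimension at most one). By $\wt J$-orthogonality the quadratic form splits additively,
\[
\wt J[\de]^2 = \wt J[P\de]^2 + \wt J[\de - P\de]^2 .
\]
Coercivity of $\wt J$ on $\cV$ gives $\wt J[P\de]^2 \geq c_1 \Vert P\de \Vert^2$ for some $c_1 > 0$. For the second term, the hypothesis $\wt J[\eta]^2 > 0$ on every nonzero $\eta$ which is $\wt J$-orthogonal to $\cV$, combined with the finite-dimensionality of this complement, yields a uniform bound $\wt J[\de - P\de]^2 \geq c_2 \Vert \de - P\de \Vert^2$ by compactness of the unit sphere in that complement. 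Combining the two bounds and using the boundedness of $P$ (so that $\Vert P\de\Vert^2 + \Vert \de - P\de\Vert^2$ controls $\Vert \de\Vert^2$ up to a multiplicative constant) gives $\wt J[\de]^2 \geq c\, \Vert \de \Vert^2$ for some $c > 0$, which is coercivity on $\cW$.

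The one step that deserves care is the construction of the $\wt J$-orthogonal projector $P$ and verifying that the splitting $\de = P\de + (\de - P\de)$ is genuinely orthogonal in the sense that the cross term $\wt J(P\de, \de - P\de)$ vanishes; this is essentially the defining property of $P$, but it is the linchpin of the argument. All remaining estimates are then automatic from coercivity on $\cV$ on one hand and finite-dimensionality of $\cV^{\jperp} \cap \cW$ on the other.
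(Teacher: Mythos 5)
Your proof is correct and is essentially the argument the paper has in mind: the paper offers no in-text proof, simply deferring to Hestenes \cite{Hes51}, and your construction of the $\wt J$-orthogonal projector onto $\cV$ (Lax--Milgram on $\cV$), the additive splitting $\wt J[\de]^2=\wt J[P\de]^2+\wt J[\de-P\de]^2$, and uniform positivity on the at most one-dimensional complement is exactly that classical Hestenes decomposition. Note only that the bound $\dim\bigl(\cW\cap\cV^{\perp_{\wt J}}\bigr)\le 1$ itself relies on coercivity of $\wt J$ on $\cV$ (which forces $\cV\cap\cV^{\perp_{\wt J}}=\{0\}$, so the complement injects into $\cW/\cV$), a point your projector construction supplies implicitly.
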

The Hamiltonian relative to \eqref{eq:jsebis}--\eqref{eq:zetabis} is given by the quadratic form
\begin{equation}
H\se_t(\dep,\dx) = -\frac{1}{2R(t)}\left( \scal{\dep}{\dot g_{t}(\wxf)} + \liedede{\dx}{\dot g_{t}}{(-\wtc)}{\wxf} \right)^2
\label{eq:Hsec}
\end{equation}
while the associated Hamiltonian linear system with initial conditions in $L\se_T$ is given by
\begin{equation}\label{eq:hamisyse}
\left\{
\begin{alignedat}{2}
& \dot{\mu}(t) =   \frac{1}{R(t)}\! \left(\! \scal{\mu(t)}{\dot g_{t}(\wxf)} 
\!
+ \!
\liedede{\zeta(t)}{\dot g_{t}}{(-\wtc)}{\wxf} \!\right)
\liedede{(\cdot)}{\dot g_{t}}{(-\wtc)}{\wxf} , \quad && \mu(T)=0\\
& \dot{\zeta}(t) =  \frac{- 1}{R(t)}\! \left( \! \scal{\mu(t)}{\dot g_{t}(\wxf)}
\!
+ \! 
\liedede{\zeta(t)}{\dot g_{t}}{(-\wtc)}{\wxf} \!  \right)
\dot g_{t}(\wxf), &&  \zeta(T)=\dx .
\end{alignedat}
\right. 
\end{equation}
$\wt J$ is coercive on $\cV$ if and only if for any solution of the Hamiltonian system \eqref{eq:hamisyse} where $\dx\not= 0$,  we have  $\zeta(t)\not =0$ for any $t \in [\wtau{2}, T]$, see for example \cite{SZ97}. %
This concludes the case $k(\wxf) = 0$.

Assume $k(\wxf) \neq 0$ and consider the variations $\de \in \cW $ which are $\wt J$-orthogonal to $\cV$. %
In terms of system \eqref{eq:hamisyse} the bilinear form associated to $\wt J$ \eqref{eq:jsebis} is given by 
\begin{equation}
\wt J[\de, \overline\de] = \scal{\overline\mu(\wtau{2})}{\zeta(\wtau{2})} +  \scal{\mu(\wtau{2})}{\overline\zeta(\wtau{2})} + \wtg\se_{\wtau{2}}[\zeta(\wtau{2}), \overline\zeta(\wtau{2})] ,
\label{eq:bilin}
\end{equation}
where $\de = (\dx, w)$ and $\overline\de = (\overline\dx, \overline w)$ are in $\cW$ and $\left( \mu(\wtau{2}), \zeta(\wtau{2})\right)$ and  $\left( \overline\mu(\wtau{2}), \overline\zeta(\wtau{2})\right)$ are the solutions of the Hamiltonian system \eqref{eq:hamisyse}  with initial conditions $(0, \dx)$ and $(0, \overline \dx)$, respectively.
Thus $\de \in \cW \cap \cV^{{\wt J}^\perp}$ if and only if there exists $\dep \in T^*_{\wxf}M$ such that 
\[
\wt J[\de, \overline\de] = \scal{\dep}{\overline\zeta(\wtau{2})} \qquad \forall \overline\de \in \cW
\]
i.e.~if and only if
\[
\begin{cases}
\scal{\overline\mu(\wtau{2})}{\zeta(\wtau{2})} = 0 \qquad & \forall \overline\de, \\
\dep = \mu(\wtau{2}) + \wtg\se_{\wtau{2}}[\zeta(\wtau{2}), \, \cdot \, ]. \qquad &
\end{cases}
\]
Hence 
\begin{equation}
0 < \wt J[\de]^2 = \scal{\mu(\wtau{2})}{\zeta(\wtau{2})} + \wtg\se_{\wtau{2}}[\zeta(\wtau{2})]^2 \qquad \forall \de \in  \cW \cap \cV^{{\wt J}^\perp}.
\label{eq:posperp}
\end{equation}


Since $\zeta(\wtau{2}) \in \R \, k(\wxf)$, from equation \eqref{eq:posperp} we get
\begin{equation}
0 <  \wtg\se_{\wtau{2}}[k(\wxf)]^2 + \scal{\mu(\wtau{2})}{k(\wxf)} 
= H_{12}(\lu) - \liededo{k}{(-\wtc)}{\wxf} + \scal{\mu(\wtau{2})}{k(\wxf)} .
\label{eq:posperp2}
\end{equation}
\subsection{The antisymplectic isomorphism}\label{sec:antiso}
Define the linear mapping $\iota$ by
\[
\iota\colon (\dep,\dx)\in T^*_{\wxf}M \otimes T_{\wxf}M \mapsto \dl := -\dep + \ud\,(-\wtc)_*\dx\in T_{\lf}T^*M
\]
so that
\[
\iota^{-1} \colon \dl\in T_{\lf}T^*M \mapsto \left( \ud\,(-\wtc)_*\pi_*\dl - \dl , \pi_*\dl \right) \in T^*_{\wxf}M\otimes T_{\wxf}M.
\]
Moreover $\iota$ is an antisymplectic ismorphism, i.e.
\[
\dueforma{\iota (\dep,\dx)}{\iota (\overline{\dep},\overline{\dx})} = \dueforma{(\overline{\dep},\overline{\dx})}{(\dep,\dx)},  \quad \forall (\dep,\dx) , \ (\overline{\dep},\overline{\dx}) \in T^*_{\wxf}M \otimes T_{\wxf}M.
\]
With this notation we get
\begin{equation*}
\iota  L\se_T  =\left\{\ud\,(-\wtc)_*\dx \colon \dx\in T_{\wxf}M \right\} = T_{\lf}\Lambda .
\end{equation*}
Following the lines of Lemma 9 in \cite{PS11} one can prove the following Lemma:
\begin{lemma}
Let $\cH\se_t$ and $\cH_t$ be the Hamiltonian flows associated to the quadratic Hamiltonian $H\se_t$ defined in \eqref{eq:Hsec} and to the overmaximised Hamiltonian $H_t$ defined in \eqref{eq:Htdef}, respectively. Then 
\begin{equation}
\iota\cH\se_t\iota^{-1}=\wh\cF_{t*}^{-1}\cH_{t*} \qquad \forall t \in [\wtau{2}, T].
\label{eq:iotaflusso}
\end{equation}
\end{lemma}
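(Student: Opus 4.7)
I view both sides of \eqref{eq:iotaflusso} as smooth families of linear automorphisms of $T_{\lf}T^*M$: set $\Phi_t := \iota\cH\se_t\iota^{-1}$ and $\Psi_t := \wh\cF_{t*}^{-1}\cH_{t*}$, with the Jacobians taken at $\lf$. Since $\Phi_T = \Psi_T = \id$, it will suffice to show that $\Phi_t$ and $\Psi_t$ satisfy the same linear non-autonomous ODE on $[\wtau{2},T]$; the identity then follows by uniqueness. The plan is to compute and match the two generators.

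\textbf{Generators.} Differentiating $\Psi_t$ in $t$ and using that $\cH_t(\lf)=\wh\cF_t(\lf)=\wla(t)$ (so the two Hamiltonian vector fields agree on the reference trajectory) gives
\[
\dot\Psi_t = \wh\cF_{t*}^{-1}\bigl(\overrightarrow{H_t-\wh F_t}\bigr)_{*,\,\wla(t)}\,\wh\cF_{t*}\cdot\Psi_t.
\]
On the singular interval we have $H_t-\wh F_t = \wt H_2 - H_2$, which by Lemma \ref{le:invarianze}.1--2 vanishes on $\cS$ and is therefore critical at $\wla(t)\in\cS$, with quadratic part $\uD^2(\wt H_2-H_2)(\wla(t))[\delta\mu]^2 = \dueforma{\delta\mu}{\vH{23}(\wla(t))}^2/R(t)$. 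Hence the generator of $\Psi_t$ is the linear Hamiltonian vector field on $T_{\lf}T^*M$ associated to the pulled-back quadratic form
\[
Q_t(\delta\ell):=\frac{\dueforma{\wh\cF_{t*}\delta\ell}{\vH{23}(\wla(t))}^2}{2R(t)}.
\]
For $\Phi_t$, since $\cH\se_t$ is the linear flow of the quadratic Hamiltonian $H\se_t$ and $\iota$ is antisymplectic, the generator is the Hamiltonian vector field of the quadratic form $-H\se_t\circ\iota^{-1}$. The lemma thus reduces to the identity $-H\se_t\circ\iota^{-1}=Q_t$ on $T_{\lf}T^*M$.

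\textbf{Matching the quadratic forms.} Two facts collapse $Q_t$ into $H\se_t$: first, $\wh\cF_{t*}$ is a symplectomorphism; second, by Lemma \ref{le:invarianze}.6 together with the definition $g_t = \wS_{t*}^{-1}f_1\circ\wS_t$, one has $H_{23}\circ\wh\cF_t = \dot G_t$ in a neighbourhood of $\lf$, so that $\wh\cF_{t*}^{-1}\vH{23}(\wla(t)) = \vdG{t}(\lf)$. Using the standard identity $\sigma(v,\vec K)=\scal{\ud K}{v}$ I rewrite
\[
\dueforma{\wh\cF_{t*}\delta\ell}{\vH{23}(\wla(t))} \;=\; \scal{\ud \dot G_t(\lf)}{\delta\ell}.
\]
I then plug in $\delta\ell = \iota(\dep,\dx) = -\dep + \ud(-\wtc)_*\dx$ and expand $\dot G_t(\ell) = \scal{\ell}{\dot g_t(\pi\ell)}$: the vertical piece $-\dep$ produces (up to sign) $\scal{\dep}{\dot g_t(\wxf)}$, while the Lagrangian-horizontal piece $\ud(-\wtc)_*\dx$ produces $\liedede{\dx}{\dot g_t}{(-\wtc)}{\wxf}$. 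Squaring and dividing by $2R(t)$ reproduces exactly $-H\se_t\circ\iota^{-1}$ as defined in \eqref{eq:Hsec}.

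\textbf{Main obstacle.} The critical step will be the last algebraic identification. The difficulty lies in coherently tracking signs through three superimposed conventions -- the antisymplecticity of $\iota$, the symplectic-form convention on $T^*M$, and the second Lie-derivative expansion of $\dot G_t$ -- and recognising that the vertical/horizontal decomposition of $\iota(\dep,\dx)$ with respect to $\Lambda$ produces precisely the two summands appearing inside the square in \eqref{eq:Hsec}. The computation is of the same flavour as \cite[Lemma 9]{PS11}; its novelty here is the simultaneous use of Lemma \ref{le:invarianze}.2, to access the singular-arc Hessian of $\wt H_2-H_2$ (rather than a bang-arc first-order difference), and of Lemma \ref{le:invarianze}.6, to transport $\vH{23}$ from $\wla(t)$ back to $\lf$ along the reference flow.
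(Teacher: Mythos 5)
Your overall strategy---regard both sides as linear flows on $T_{\lf}T^*M$ that equal the identity at $t=T$, identify their generators as linear Hamiltonian vector fields, and reduce the lemma to an equality of time-dependent quadratic Hamiltonians via Lemma \ref{le:invarianze}---is the natural one and is the route the paper itself intends: no proof is given in the text, which defers to Lemma 9 of \cite{PS11}, whose argument has exactly this structure. The preliminary steps are sound (the variational-equation formula for $\dot\Psi_t$, the fact that $\wla(t)\in\cS$ makes $\wt H_2-H_2$ critical there so Lemma \ref{le:invarianze}.2 applies, the rule that antisymplectic conjugation turns the flow of $H\se_t$ into the flow of $-H\se_t\circ\iota^{-1}$). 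One attribution is off, though harmless: the transport identity $\wh\cF_{t*}^{-1}\vH{23}(\wla(t)) = \vdG{t}(\lf)$, with $\dot G_{t}(\ell):=\scal{\ell}{\dot g_{t}(\pi\ell)}$, does not come from Lemma \ref{le:invarianze}.6 (which concerns the invariance of $\vF{1}$ on $\Sigma$ under the overmaximised flow $\cH_t$); it holds because $\wh F_t$ is the lift of the vector field $\fref{t}$, so $\wh\cF_t$ is the cotangent lift of $\wS{t}$ and the pullback of the lifted Hamiltonian $H_{23}$ is the lift of $\wSinv{t*}h_{23}\circ\wS{t}=\dot g_{t}$, cf.\ \eqref{eq:dotg}.

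The genuine gap is the step you yourself flag as the main obstacle and then only assert: that squaring and dividing by $2R(t)$ ``reproduces exactly $-H\se_t\circ\iota^{-1}$''. Carrying out that computation with the conventions as stated in the paper, one finds
\begin{equation*}
\scal{\ud \dot G_t(\lf)}{\iota(\dep,\dx)} \;=\; -\,\scal{\dep}{\dot g_{t}(\wxf)} \;+\; \liedede{\dx}{\dot g_{t}}{(-\wtc)}{\wxf},
\end{equation*}
because the vertical component of $\iota(\dep,\dx)$ is $-\dep$; hence $Q_t\circ\iota$ is $\tfrac{1}{2R(t)}\bigl(\scal{\dep}{\dot g_{t}(\wxf)}-\liedede{\dx}{\dot g_{t}}{(-\wtc)}{\wxf}\bigr)^2$, whose cross term has the opposite sign to the one inside the square in \eqref{eq:Hsec}. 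The antisymplectic conjugation you invoke only flips the overall sign of the Hamiltonian, not the relative sign of the two summands, so it cannot repair this. Either a compensating sign exists and must be exhibited explicitly---by pinning down the symplectic-form and Hamiltonian-vector-field conventions used simultaneously in \eqref{eq:hamisyse}, in Lemma \ref{le:invarianze}.2 and in the definition of $\iota$---or the identification of the quadratic forms fails as you state it. Until this bookkeeping is actually done and the two quadratic Hamiltonians are shown to coincide, the proof is incomplete precisely at its decisive step.
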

\subsection{Proof of the main result}\label{sec:proof}
Applying Theorem \ref{thm:main1}, the proof of our main result, Theorem \ref{thm:main}, is completed once we show that $\pi\cH_t$ is locally Lipschitz one-to-one for each $t \in [0, T]$. In fact, as $[0, T]$ is a compact interval,   the map $\id \times \pi\cH_t$ defined in \eqref{eq:mappa} is locally Lipschitz invertible if and only if  for all $t \in [0, T]$ the map
\[
\pi\cH_t \colon \Lambda \mapsto \pi\cH_t(\ell) \in \cO ( \wxi(t))
\]
is locally Lipschitz invertible.
We in fact show that $\pi_*\cH_{t*} \colon T_{\lf}\Lambda \to T_{\wxi(t)}M$  is one-to-one for $t \neq \wtau{1}$ and by means of Clarke inverse function theorem, see \cite{Cla76, Cla83}, for $t=\wtau{1}$.

Since the Hamiltonian $\wh F_t$ is the lift of a vector field, from the coercivity of $\wt J$ on $\cV$ and \eqref{eq:iotaflusso}, the claim holds for any $t \in [\wtau{2}, T]$.
%
%
%
%
For $t \in (\wtau{1}, \wtau{2})$ from the definition of the flow, equation \eqref{eq:flusso}, we get 
\[
\left( \pi\cH_{t} \right)_* = \exp (t -\wtau{2})h_{2*}\pi_*\cH_{\wtau{2}*};
\]
hence we now have to prove the invertibility of $\left( \pi\cH_{\wtau{1}} \right)_*$.

Let $\dl \in T_{\lf}\Lambda$ and set $\wt\dl := \left( \pi\cH_{\wtau{2}} \right)_*\dl$. Notice that since $\left( \pi\cH_{\wtau{2}} \right)_*$ is one-to-one, then $\pi_*\wt\dl =0$ if and only if $\wt\dl = 0$.

Thus the linearization of $\pi\cH_{\wtau{1}}(\ell) $ at $\lf$ is given by
\begin{align*}
\left( \pi\cH_{\wtau{1}}\right)_* \! \dl &= \!
\begin{cases}
\exp(\wtau{1} - \wtau{2})h_{2\, *}   \pi_* \wt\dl \ & \scal{\ud\tau_{1}(\ld)}{\wt\dl} < 0 , \\
\scal{\ud\tau_{1}(\ld)}{\wt\dl}(h_2 - h_1)(\wxu) +   \exp(\wtau{1} - \wtau{2})h_{2\, *}    \pi_* \wt\dl   \;  & \scal{\ud\tau_{1}(\ld)}{\wt\dl} > 0.
\end{cases} \\
&= \! \begin{cases}
\exp(\wtau{1} - \wtau{2})h_{2\, *}   \pi_* \wt\dl \qquad & \scal{\ud\tau_{1}(\ld)}{\wt\dl} < 0 , \\
\exp(\wtau{1} - \wtau{2})h_{2\, *} \left(
  \pi_* \wt\dl  - \scal{\ud\tau_{1}(\ld)}{\wt\dl}\wt k (\wxd)   \right)  \quad & \scal{\ud\tau_{1}(\ld)}{\wt\dl} > 0.
\end{cases} 
\end{align*}
where $\wt k := \wS{\wtau{2}*} k\circ\wSinv{\wtau{2}} = \exp ((\wtau{2} - \wtau{1})h_2)_* (h_1 - h_2)
\circ \exp (\wtau{1} - \wtau{2})h_2$.
It thus suffices to prove that for any $a \in [0,1]$ and $\dl\in T_{\lf}\Lambda$, $\dl \neq 0$
\[
(1-a)  \pi_*(\wt\dl) + a  \left(  \pi_*(\wt\dl) - \scal{\ud\tau_{1}(\ld)}{\wt\dl}\wt k (\wxd) \right) \neq 0.
\]
If $\scal{\ud\tau_{1}(\ld)}{\wt\dl}\wt k (\wxd) = 0 $ there is nothing to prove. 
Otherwise  assume by contradiction there exist $a \in [0,1]$, $\dl \in T_{\lf}\Lambda$ such that
\begin{equation}
 \pi_* \wt\dl- a \, \scal{\ud\tau_{1}(\ld)}{\wt\dl}\wt k (\wxd)  = 0. \label{eq:invert}
\end{equation}
Since $\left(\pi\cH_{\wtau{2}}\right)_*$ is bijective, there exists a function 
$\a_2 \colon \cO(\wxd) \to \R$ such that 
\begin{equation}\label{eq:alpha2}
\ud\a_2(\wxd) = \ld 
\text{, and }\ \cH_{\wtau{2}*}\left(T_{\lf}\Lambda \right) = \ud\a_{2*} \left( \pi\cH_{\wtau{2}} \right)_* \left(T_{\lf}\Lambda \right) .
\end{equation}

 Thus, from \eqref{eq:invert} we get
\begin{equation}
0 = \ud\a_{2\, *} \left(  \pi_* \wt\dl - a \, \scal{\ud\tau_{1}(\ld)}{\dl}\wt k (\wxd) \right) 
=   \wt\dl  - a \, \scal{\ud\tau_{1}(\ld)}{\wt\dl} \ud\a_{2\, *} \wt k (\wxd) .
\label{eq:invert2}
\end{equation}
Computing $\ud\tau_1(\ld)$ on each side of \eqref{eq:invert2} we finally get
\[
\begin{split}
0 & =  \scal{\ud\tau_{1}(\ld)}{\wt\dl}  - a \, \scal{\ud\tau_{1}(\ld)}{\wt\dl} \scal{\ud\tau_{1}(\ld)}{\ud\a_{2\, *} \wt k (\wxd)} = \\ %
& 
= \scal{\ud\tau_{1}(\ld)}{\wt\dl} \left( 1 - a \, \scal{\ud\tau_{1}(\ld)}{\ud\a_{2\, *} \wt k (\wxd)} \right) 
\end{split}
\]
i.e.~$1 - a \, \scal{\ud\tau_{1}(\ld)}{\ud\a_{2\, *} \wt k (\wxd)}  = 0$ or, equivalently by \eqref{eq:dtau1}, 
\begin{equation*}
H_{12}(\lu) - a \,  \dueforma{\ud\a_{2\, *}\wt k(\wxd)}{\overrightarrow{\wt K}(\ld)} = 0
\end{equation*}
where $\overrightarrow{\wt K} = \exp \left( \wtau{2} - \wtau{1} \right){\vH{2}_*}\left( \vH{2} - \vH{1}\right)$, so that  
\begin{equation}\label{eq:contra}
H_{12}(\lu) - a \,  \liededo{\wt k}{\a_2}{\wxd} = 0 .
\end{equation}
We now use \eqref{eq:posperp2}, i.e.~the coercivity of $\wt J$, to get a contradiction.
Let $(0, \dx) \in L\se_T$ be such that $\cH\se_{\wtau{2}}(0, \dx) = \left( \mu(\wtau{2}), k(\wxf)\right)$. Then
\[
\begin{split}
 & \scal{\mu(\wtau{2})}{k(\wxf)} = \dueforma{\cH\se_{\wtau{2}}(0, \dx)}{(0, k(\wxf))} = 
 \dueforma{\cH\se_{\wtau{2}}\iota \iota^{-1}(0, \dx)}{\iota \iota^{-1}(0, k(\wxf))} = \\ 
& =  \dueforma{\cH\se_{\wtau{2}}\iota \ud\, (-\wtc)_* \dx}{\iota \ud\, (-\wtc)_* k(\wxf)} = 
  \dueforma{\iota \ud\, (-\wtc)_* k(\wxf)}{\iota^{-1}\cH\se_{\wtau{2}}\iota \ud\, (-\wtc)_* \dx} = \\ 
& =  \dueforma{\ud\, (-\wtc)_* k(\wxf)}{ {\wh{\cF}_{\wtau{2}\, *}}^{-1}{{\cH}_{\wtau{2}\, *}}    \ud\, (-\wtc)_* \dx} =  
\dueforma{\ud\, \left( -\wtc \circ \wSinv{\wtau{2}}\right)_*  \wt k(\wxd)}{ \ud\a_{2\, *}\wt k(\wxd)} .
\end{split}
\]
The last equality holds because $\Fref{t}$ is the lift of a vector field and thanks to \eqref{eq:alpha2}.
Moreover 
\[
\liededo{k}{(-\wtc)}{\wxf} = \liededo{\wt k}{(-\wtc \circ \wSinv{\wtau{2}})}{\wxd} .
\]
Substituting in \eqref{eq:posperp2} we finally get
\begin{equation*}
\begin{split}
0 <  &  H_{12}(\lu) - \liededo{\wt k}{(-\wtc \circ \wSinv{\wtau{2}})}{\wxd} + \dueforma{\ud\, \left( -\wtc \circ \wSinv{\wtau{2}}\right)_*  \wt k(\wxd)}{ \ud\a_{2\, *}\wt k(\wxd)}  = \\ 
& = H_{12}(\lu) - \liededo{\wt k}{\a_2}{\wxd} ,
\end{split}
\end{equation*}
a contradiction to \eqref{eq:contra}.

\subsection{Examples}\label{sec:examples}
\paragraph{Van der Pol Oscillator.}
As an example consider the following Van der Pol Oscillator, studied in \cite{Mau07} where the author numerically shows that the optimal control is bang-bang-singular. 
\begin{subequations}
\begin{align}
& \text{minimize } \ \dfrac{1}{2}\int_0^4 \left(\xi_1^2 + \xi_2^2 \right)(t)\ud t \quad \text{subject to } \\
\begin{split}
& \dot\xi_1(t) = \xi_2(t), \\
& \dot\xi_2(t) = -\xi_1(t) + \xi_2(t)\left( 1 - \xi_1^2(t) \right) + u(t) , \qquad 
\end{split}
\quad \qo t \in [0,4], \label{eq:dinamicavan2}\\
& \xi(0) = \left(0, 1\right), \quad \xi(4) \in \R^2. \label{eq:iniziovan2}
\end{align}
\end{subequations}
The problem can be restated as a Mayer problem by substituting the state variable $\xi$ with a state variable (which we still denote as $\xi$) in $\R^3$:
\begin{subequations}
\begin{align}
& \text{minimize } \ \xi_3(4) \quad \text{subject to } \\
\begin{split}
& \dot\xi_1(t) = \xi_2(t), \\
& \dot\xi_2(t) = -\xi_1(t) + \xi_2(t)\left( 1 - \xi_1^2(t) \right) + u(t) , \qquad \\
& \dot\xi_3(t) = \dfrac{1}{2}\left(\xi_1^2(t) + \xi_2^2(t) \right) , 
\end{split}
\quad \qo t \in [0,4], \label{eq:dinamicavan}\\
& \xi(0) = \left(0, 1, 0\right), \quad \xi(4) \in \R^3. \label{eq:iniziovan}
\end{align}
\end{subequations} 
More precisely the author numerically shows that the optimal control has two bang arcs and a singular arc where the control can be written as a feedback control.
\[
\wh u = \begin{cases}
-1  \quad & t\in [0,  \wtau{1}), \\
1  \quad & t\in (\wtau{1}, \wtau{2}), \\
u_{\rm sing}(x) =  2 x_1 - x_2 \left( 1 - x_1^ 2 \right) \quad & t\in (\wtau{2}, 4].
\end{cases}
\]
with $\wtau{1}\simeq 1.3667$, $\wtau{2} \simeq 2.4601$.

The problem fits in our setting defining 
\[
h_1(x) = h_3(x) = \begin{pmatrix}
x_2 \\
-x_1 + x_2(1-x_1^2)- 1\\
\frac{x_1^ 2+ x_2^2}{2}
\end{pmatrix}, \quad 
h_2(x) = \begin{pmatrix}
x_2 \\
-x_1 + x_2(1-x_1^2)+ 1\\
\frac{x_1^ 2+ x_2^2}{2}, 
\end{pmatrix},
\]
\[
\mathcal{X} = \co\left\{ h_1, h_2 \right\}, \quad 
\wu(x) = \dfrac{1+ u_{\rm sing}(x)}{2} \in (0,1), \quad
\fS(x) = \begin{pmatrix}
0 \\-2\\0
\end{pmatrix}.
\]

\paragraph{Bilinear systems.} Consider the following example proposed in \cite{LS15} with state-space $M := \left\{
N = \left( N_1, \ldots N_n \right) \in \Rn \colon  N_i > 0, \ i =1, \ldots n
\right\}$ and control set $U :=\displaystyle\prod_{i=1}^m [0, u_i^{\max}] $:
\begin{align*}
& \text{minimize } \ C(u) := \scal{r}{N(T)} + \int_0^ T \scal{q}{N(T)} + \scal{s}{u(t)} \ud t  \ \text{ subject to } \\
& \dot N(t) = \left( A + \sum_{j=1}^{m} u_j(t) B_j \right)N(t)
\quad \qo t \in [0,T], \\
& u \in L^\infty\left([0, T], U \right), \\\
& N(0) = N_0 .
\end{align*}
where $T >0$ is fixed and $A, \ B_1, \ldots, B_m$ are given $n \times n$ matrices, 

The problem can be transformed into a Mayer one on $M \times \R$ and the control box can be normalised to the unit control box $\wt U := [0, 1]^m $ by setting 
\[
\wt s_j = u_j^{\max} s_j , \quad C_j := u_j^{\max} B_j, \quad \forall j=1, \ldots m
\]
as
\begin{align*}
& \text{minimize } \ C(u) := \scal{r}{N(T)} + N_{n+1}(T)   \ \text{ subject to } \\
& \dot N(t) = \left( A + \sum_{j=1}^{m} u_j(t) C_j \right)N(t)
\quad \qo t \in [0,T], \\
& \dot N_{n+1}(t) =  \scal{q}{N(t)} + \scal{\wt s}{u(t)}, \\
& u \in L^\infty\left([0, T], \wt U \right), \\\
& N(0) = N_0 , \quad N_{n+1}(0) = 0.
\end{align*}
Denote as $\wt x = \left(x, x_{n+1} \right)$ the points in $M \times \Rn$ and set
\[
\wt r := \begin{pmatrix}
r, 1
\end{pmatrix}, \
f_0 (\wt x) := \begin{pmatrix}
A & 0 \\
q & 0
\end{pmatrix}\wt x, \quad
f_j(\wt x) := \begin{pmatrix}
C_j & 0 \\
0 & 1
\end{pmatrix}\begin{pmatrix}
x \\ \wt s_j
\end{pmatrix} \quad j=1, \ldots, m.
\]
Then the problem can be written as 
\begin{align*}
& \text{minimize } \ C(u) := \scal{\wt r}{\wt\xi(T)}\ \text{ subject to } \\
& \dot{ \wt\xi}(t) = f_0(\wt\xi(t))  + \sum_{j=1}^{m} u_j(t) f_j(\wt\xi(t))
\quad \qo t \in [0,T], \\
&  u \in L^\infty\left([0, T], \wt U \right), \\\
& \wt\xi(0) = \begin{pmatrix}
N_0 \\ 0
\end{pmatrix}.
\end{align*}
Thus the problem fits into our setting defining $X_1 = f_0$, $X_{j+1} = f_0 + f_j$, $j=1, \ldots, m$ $X_{m - 1 + j+ k } = f_0 + f_j + f_k$, $1 \leq j < k \leq m$, \ldots ,  $X_{2^m} = f_0 + f_1 + f_2 +\ldots + f_m$.
\bibliography{bibliocompleta} \bibliographystyle{plain}
\end{document}